\documentclass[11pt]{amsart}

\usepackage[T1]{fontenc}
\usepackage{lmodern}
\usepackage{microtype}
\usepackage{mathtools}
\usepackage{amssymb}
\usepackage{enumitem}
\usepackage[hidelinks]{hyperref}
\usepackage[nameinlink,noabbrev]{cleveref}

\allowdisplaybreaks
\setlist[enumerate]{label=(\roman*),leftmargin=2.1em}

\newcommand{\N}{\mathbb N}
\newcommand{\Z}{\mathbb Z}
\newcommand{\Q}{\mathbb Q}
\newcommand{\cP}{\mathcal P}
\newcommand{\cI}{\mathcal I}
\newcommand{\fd}{\operatorname{fd}}
\newcommand{\radQ}{\operatorname{rad}_{\Q}}
\newcommand{\Gal}{\operatorname{Gal}}
\newcommand{\lcm}{\operatorname{lcm}}
\newcommand{\Res}{\operatorname{Res}}

\newtheorem{theorem}{Theorem}[section]
\newtheorem{proposition}[theorem]{Proposition}
\newtheorem{lemma}[theorem]{Lemma}
\newtheorem{corollary}[theorem]{Corollary}
\newtheorem{question}[theorem]{Question}
\newtheorem*{hrsaturation}{Halberstam--Richert saturation theorem}

\theoremstyle{definition}
\newtheorem{definition}[theorem]{Definition}
\newtheorem{example}[theorem]{Example}
\newtheorem{remark}[theorem]{Remark}

\crefname{theorem}{Theorem}{Theorems}
\crefname{proposition}{Proposition}{Propositions}
\crefname{lemma}{Lemma}{Lemmas}
\crefname{corollary}{Corollary}{Corollaries}
\crefname{definition}{Definition}{Definitions}
\crefname{example}{Example}{Examples}
\crefname{remark}{Remark}{Remarks}
\crefname{question}{Question}{Questions}

\title[Universal separation of divisor profiles]{Intersective polynomials
and universal separation of divisor profiles}
\author{Zihan Zhang}

\address{(Zihan Zhang) School of Mathematical Sciences and LPMC, Nankai University, Tianjin
300071, People's Republic of China}
\email{2211056@mail.nankai.edu.cn}

\subjclass[2020]{11A05}
\keywords{divisors, intersective polynomials, almost-prime values}

\begin{document}

\begin{abstract}
We classify universal divisor-profile separation for coprime polynomial pairs
of arbitrary degree and for all pairs of degree at most two.  For
\(A\subset\N\) and \(m\in\Z\), let \(d_A(m)\) count the members of \(A\)
dividing \(m\).  For coprime nonzero \(F,G\in\Z[x]\), universally unbounded
separation between \(d_A(F(n))\) and \(d_A(G(n))\) occurs if and only if one
of \(F,G\) is intersective, that is, has a root modulo every positive integer.
More generally, an intersective factor separated from finitely many polynomial
opponents yields simultaneous one-sided dominance against all of them.

For pairs with common irreducible factors, let \(U,V\) be the products of
the factors occurring only on the two respective sides.  Universal
separation forces \(UV\) to have a root modulo almost every prime;
equivalently, its Galois action has no derangement.  We resolve the remaining
finite \(p\)-adic boundary for all pairs of degree at most two: separation
holds exactly when \(UV\) and at least one of \(F,G\) are intersective, and
the criterion is unchanged by contents or factor multiplicities.  The same
criterion holds, in arbitrary degree, for three linear support factors with
arbitrary positive multiplicities.  The proofs combine uniform almost-prime
values on root progressions with an adaptive local-routing argument.
\end{abstract}

\maketitle

\section{Introduction}

For an infinite set \(A\subset\N=\{1,2,\ldots\}\), S\'ark\H{o}zy asked
whether the difference between the numbers of members of \(A\) dividing
\(n\) and \(n+1\) must be unbounded
\cite{Sarkozy2001}.  Ding answered this question and subsequently classified
all linear pairs \(bn+c,en+f\) with the analogous property
\cite{DingLinear}.  Ding also proved a simultaneous form in which \(n\)
dominates finitely many neighbors \(b_i n\pm1\)
\cite{DingProblem26}.

Relative to these linear results, the present paper classifies coprime pairs
in arbitrary degree, replaces the linear neighbors by finitely many arbitrary
polynomial opponents, isolates the quotient support governing the first
common-factor obstruction, treats three linear support factors with arbitrary
positive multiplicities, and closes all remaining support and multiplicity
patterns in degree at most two.

The modular-root condition now called polynomial intersectivity appears
already in \cite[p.~335, item~(2)]{KamaeMendesFrance}, where the corresponding
polynomial value sets are shown to be van der Corput sets.  Intersectivity
has since served as the exact local condition in polynomial recurrence and
difference-set problems; see
\cite{Lucier2006,BergelsonLeibmanLesigne,Le2010}.
Here it enters a different universal problem.  In the coprime setting it is
the complete threshold for profile separation, whereas in the common-factor
setting its almost-everywhere shadow---prime-covering---first appears as a
necessary Galois condition.  The finite set of primes suppressed by that
condition is precisely where the remaining \(p\)-adic difficulty lies.  To
make this distinction effective in the low-degree cases used below, we also
record a complete finite test for intersectivity through degree three in
\cref{prop:cubic-intersectivity}.

We consider the polynomial extension proposed in
\cite[Problem 3]{DingLinear}.  For any \(A\subset\N\) and \(m\in\Z\), put
\[
  d_A(m)=\#\{a\in A:a\mid m\},
\]
where the cardinality may be infinite.  In particular, \(d_A(0)=|A|\).
When \(A\) is infinite, this requires care if both arguments vanish.  We
therefore use the extended difference
\begin{equation}\label{eq:extended-difference}
  \delta_A(u,v)=
  \begin{cases}
    0,&u=v=0,\\
    \infty,&\text{exactly one of \(u,v\) is zero},\\
    |d_A(u)-d_A(v)|,&uv\ne0.
  \end{cases}
\end{equation}
For \(F,G\in\Z[x]\), let \(\cP(F,G)\) denote the property
\begin{equation}\label{eq:property-P}
  \text{for every infinite \(A\subset\N\)},\qquad
  \limsup_{n\to\infty}\delta_A(F(n),G(n))=\infty.
\end{equation}

The first result gives a complete answer when the polynomials have no common
factor.

\begin{theorem}[Coprime classification]\label{thm:coprime-classification}
Let \(F,G\in\Z[x]\setminus\{0\}\) be coprime in \(\Q[x]\).  Then
\[
  \cP(F,G)
  \quad\Longleftrightarrow\quad
  \text{\(F\) or \(G\) is intersective}.
\]
\end{theorem}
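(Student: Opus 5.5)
The plan is to prove the two implications separately. Necessity is an elementary local construction. Sufficiency combines the root supplied by intersectivity with almost-prime values of the opposing polynomial along a root progression.

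\emph{Necessity.} Suppose neither \(F\) nor \(G\) is intersective. Then there are moduli \(M_F,M_G\ge1\) such that \(F\) has no root modulo \(M_F\) and \(G\) has no root modulo \(M_G\). Put \(M=\lcm(M_F,M_G)\) and \(A=M\N\), which is infinite. For every \(n\) we have \(M_F\nmid F(n)\), so no member of \(A\) divides \(F(n)\); likewise for \(G(n)\). In particular \(F(n),G(n)\neq0\), so \(d_A(F(n))=d_A(G(n))=0\) and \(\delta_A(F(n),G(n))=0\) for all \(n\). Hence \(\cP(F,G)\) fails.

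\emph{Sufficiency.} By symmetry we may assume \(F\) is intersective. Fix an infinite \(A\) and a bound \(K\); we want some \(n\ge K\) with \(\delta_A(F(n),G(n))\ge K\).

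\begin{enumerate}
\item Since \(F,G\) are coprime in \(\Q[x]\), there is a nonzero \(\rho\in\Z\) (the resultant, up to contents) and \(u,v\in\Z[x]\) with \(uF+vG=\rho\). Consequently, for every prime \(p\), if \(v_p(F(n))>v_p(\rho)\) then \(v_p(G(n))\le v_p(\rho)\).
\item Choose \(K\) elements \(a_1,\dots,a_K\in A\). Set \(m=\lcm(a_1,\dots,a_K)\cdot\rho^{E}\), where \(E\) exceeds \(\max_p v_p(\rho)\). Intersectivity of \(F\) gives \(r\) with \(F(r)\equiv0\pmod m\). For every \(n\equiv r\pmod m\), all \(a_i\) divide \(F(n)\), so \(d_A(F(n))\ge K\) whenever \(F(n)\neq0\).
\item On this progression, the \(m\)-part of \(G(n)\) is controlled as follows. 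For a prime \(p\mid m\) with \(p\nmid\rho\), we have \(p\mid F(n)\) (because \(p\) divides some \(a_i\), or \(\rho^E\)), so \(p\nmid G(n)\) by (i). For \(p\mid\rho\), we have \(v_p(F(n))\ge E>v_p(\rho)\), so \(v_p(G(n))\le v_p(\rho)\). Hence \(\gcd(G(n),m^\infty)\) divides \(\rho\).
\item Apply the uniform almost-prime input (Halberstam--Richert saturation along root progressions). It gives infinitely many \(n\equiv r\pmod m\) for which the part of \(G(n)\) coprime to \(m\) has at most \(R\) prime factors counted with multiplicity, where \(R\) depends only on \(\deg G\) and not on \(m\). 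For such \(n\), the bound \(d_A(G(n))\le d(G(n))\le d(\rho)\,(R+1)^{R}\) holds, or a similar constant \(C\) independent of \(K\).
\item Take such \(n\) large enough that \(F(n)\neq0\). If \(G(n)=0\), then \(\delta_A=\infty\). Otherwise \(\delta_A\ge K-C\). Since \(K\) is arbitrary, \(\limsup_n\delta_A=\infty\).
\end{enumerate}

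\emph{Main obstacle.} The crux is step (iv): almost-prime values of \(G\) on the progression \(r+m\N\) with \(R\) uniform in the modulus \(m\). This modulus grows with \(K\), so Halberstam--Richert must be applied to the polynomial \(G(r+mx)/g\), where \(g\) removes the fixed divisor. Two things must be checked. First, after step (iii), this fixed divisor divides \(\rho\) and so is bounded. Second, the saturation exponent depends only on the degree, and in particular not on the size of the coefficients, which grow with \(m\). The remaining concern is that \(G(r+mx)/g\) might still have a fixed prime divisor \(p\nmid m\) with \(p\le\deg G\). I would absorb such primes into the modulus as well, by putting them into \(m\) and choosing \(r\) modulo them so that \(p\nmid G(r)\) where possible. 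Otherwise their contribution is bounded in terms of \(G\) alone, so the divisor bound in step (iv) stays independent of \(K\).
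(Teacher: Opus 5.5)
Your proof is correct and has the same skeleton as the paper's. Necessity is \cref{prop:both-nonintersective}; your test set \(M\N\) works as well as \(\{M^k\}\). Sufficiency is a root progression of \(F\) modulo the chosen elements of \(A\), followed by uniform Halberstam--Richert saturation for the opponent.

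The difference lies in how you control the opponent's fixed divisor. The paper proves \cref{lem:fixed-divisor}, which bounds \(\fd(K(a+M\,\cdot))\) uniformly over all \(a,M\) with \(M\mid H(a)\) by a finite case analysis of root classes modulo \(p^k\), \(k\le v_p(R)\). It then needs \cref{lem:uniform-saturation}, because dividing by that fixed divisor can leave an integer-valued but non-integral polynomial. You instead put \(\rho^E\) into the modulus, so the Bezout identity caps \(v_p(G(n))\) at \(v_p(\rho)\) for every \(p\mid m\) along the whole progression.

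This device gives more than you use. Since \(F\) has a root modulo every prime, every prime dividing \(\fd(G)\) divides \(\rho\), and hence divides \(m\). For \(p\mid m\), the non-constant coefficients of \(G(r+mx)\) have \(p\)-adic valuation at least \(v_p(m)>v_p(\rho)\ge v_p(G(r))\). So the fixed divisor \(g\) of \(G(r+m\,\cdot)\) is exactly its content, and \(G(r+mx)/g\) is a primitive polynomial in \(\Z[x]\) with no fixed prime divisor. Your closing worry about fixed primes \(p\nmid m\) is therefore vacuous. Halberstam--Richert applies directly to the product of the distinct irreducible factors, and repeated factors cost at most a factor \(\deg G\) in \(\Omega\).

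The paper's route needs an extra reduction step, but its lemmas are uniform over arbitrary root progressions, and it reuses them in the proof of \cref{thm:linear-defects}. Your route is the more self-contained path to this theorem.
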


Here a polynomial is \emph{intersective} if it has a root modulo every
positive integer.  The positive half of
\cref{thm:coprime-classification} follows from a stronger simultaneous
statement.

\begin{theorem}[Simultaneous dominance]\label{thm:simultaneous-dominance}
Let \(s\ge1\) and \(F,G_1,\ldots,G_s\in\Z[x]\setminus\{0\}\).  Suppose that
\(F\) has a nonconstant intersective divisor \(H\in\Z[x]\) such that
\[
  \gcd\!\left(H,\prod_{j=1}^sG_j\right)=1
  \qquad\text{in \(\Q[x]\)}.
\]
There is a constant \(C=C(H,G_1,\ldots,G_s)\) with the following
properties.
\begin{enumerate}
  \item If \(A\subset\N\) is finite and \(|A|>C\), then for infinitely many
  positive integers \(n\),
  \[
    d_A(F(n))>\max_{1\le j\le s}d_A(G_j(n)).
  \]
  \item If \(A\subset\N\) is infinite, then
  \[
    \limsup_{n\to\infty}
    \left(d_A(F(n))-\max_{1\le j\le s}d_A(G_j(n))\right)=\infty.
  \]
\end{enumerate}
The integers \(n\) in both assertions may be chosen so that
\(\prod_jG_j(n)\ne0\).
\end{theorem}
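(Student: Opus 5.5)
The plan is to put many members of \(A\) into \(F(n)\) by a congruence condition on \(n\) coming from the intersectivity of \(H\), and to keep every \(G_j(n)\) almost prime, so that its number of divisors is bounded independently of \(A\). First fix \(k\) members \(a_1,\dots,a_k\) of \(A\): take \(k=|A|\) in case (i), and \(k\) arbitrary in case (ii). Let \(D=\deg\prod_jG_j\). Let \(P\) be the set of primes dividing \(a_1\cdots a_k\), together with all primes \(p\le D+1\) and all primes dividing the leading coefficients and the resultants
\[
R_j=\Res(H,G_j)\neq0,
\]
which are nonzero by coprimality. Set \(L=\prod_{p\in P}p^{M}\) with \(M\) large: \(p^M\) should exceed \(a_i\) and the \(p\)-part of every \(R_j\). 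Since \(H\) is intersective, there is \(r\) with \(H(r)\equiv0\pmod L\). Then for every \(n\equiv r\pmod L\) we have \(a_i\mid H(n)\mid F(n)\) for all \(i\), hence \(d_A(F(n))\ge k\).

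Next control the \(P\)-part of each \(G_j(n)\). We have \(uH+vG_j=R_j\) with \(u,v\in\Z[x]\). So if \(p^{e}\mid G_j(n)\) for some \(p\in P\), then, because \(p^M\mid H(n)\) with \(M\) exceeding \(v_p(R_j)\), we get \(p^{\min(e,M)}\mid R_j\), and hence \(e\le v_p(R_j)\). Thus for \(n\equiv r\pmod L\),
\[
G_j(n)=g_j\cdot m_j(n),\qquad g_j\mid R_j,\quad \gcd\bigl(m_j(n),L\bigr)=1.
\]
Here the cofactor \(g_j\) has a bounded number of divisors, \(\tau(g_j)\le\tau(R_j)\), independently of \(A\). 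If some \(G_j\) is constant, it is a nonzero integer and its divisor count is trivially bounded, so we may assume each \(G_j\) is nonconstant.

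The core step is to write \(n=r+Lt\) and apply the Halberstam--Richert saturation theorem to \(\Phi(t)=\prod_j G_j(r+Lt)\big/\prod_jg_j\) (suitably normalised). The aim is to obtain infinitely many \(t\) for which \(\Phi(t)\neq0\) and
\[
\Omega\bigl(m_j(r+Lt)\bigr)\le K \quad\text{for all } j,
\]
where \(K\) depends only on \(D\) and the number of irreducible factors, and not on \(L\). The fixed-prime-divisor hypothesis is handled by the construction of \(P\). Every prime \(p\le D+1\) lies in \(P\), and the \(L\)-coprimality above shows such primes do not divide \(m_j\). A prime \(p\notin P\) does not divide \(L\), so \(t\mapsto r+Lt\) is a bijection mod \(p\); since \(p>D+1\), the polynomial \(\prod_jG_j\) cannot vanish identically mod \(p\), so \(p\) is not a fixed divisor. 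Given this, \(d_A(G_j(n))\le\tau(G_j(n))\le\tau(R_j)\,2^{K}\), and one takes \(C=\max_j\tau(R_j)\,2^{K}\), which depends only on \(H,G_1,\dots,G_s\). In case (i), \(d_A(F(n))\ge|A|>C\). In case (ii), the difference is at least \(k-C\) for infinitely many \(n\), and \(k\) is arbitrary. The nonvanishing \(\prod_jG_j(n)\neq0\) holds for all but finitely many \(t\).

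The main obstacle is this uniformity: the almost-prime bound \(K\) must be independent of the modulus \(L\), which grows with \(A\). I would verify this from the form of the Halberstam--Richert theorem, where the admissible number of prime factors depends only on the degree and the number of irreducible factors, once the coefficient size is absorbed by counting \(t\le X\) with \(X\) large in terms of \(L\). If needed, I would restrict to primes beyond \(L\) in the sieve, so that the saturation bound applies to \(m_j\) directly.
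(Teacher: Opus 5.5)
Your proof is correct. Its skeleton matches the paper's: a root progression of \(H\) modulo a multiple of the chosen members of \(A\), followed by a Halberstam--Richert bound for \(K=\prod_jG_j\) along that progression, whose threshold depends only on the degree and the number of irreducible factors. The genuine difference is how you handle fixed prime divisors.

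\textbf{The paper's route.} It keeps the modulus \(M_t=a_1\cdots a_t\) and bounds \(\fd\bigl(K(u_t+M_t\,\cdot)\bigr)\) by a constant \(C_0(H,K)\) through \cref{lem:fixed-divisor}. That lemma must account for \(v_p(\fd(K))\) at primes \(p\nmid M\), and for a finite case analysis at primes where \(v_p(M)\) does not exceed the valuation of the Bezout constant. The paper then divides out this fixed divisor, obtaining an integer-valued polynomial with fixed divisor one. Finally, \cref{lem:uniform-saturation} passes to a further subprogression to reach an integral polynomial with no fixed prime divisor.

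\textbf{Your route.} You use intersectivity to enlarge the modulus for free. You place every prime \(\le D+1\), every prime of a leading coefficient, and every prime of the \(R_j\) into \(L\), each with exponent exceeding \(v_p(R_j)\). This forces the \(P\)-part \(g_j\) of \(G_j(n)\) to be constant on the progression and to divide \(R_j\). Since then \(g_j\mid L\) and \(g_j\mid G_j(r)\), your \(\Phi\) lies in \(\Z[t]\) and has no fixed prime divisor outright. This collapses both auxiliary reductions into a single choice of modulus and gives the explicit constant \(\max_j\tau(R_j)\,2^K\).

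\textbf{What each buys.} The paper's more general lemma is reusable. In the proof of \cref{thm:linear-defects}, the progression is dictated by the routing conditions. There the polynomial playing the role of \(H\), namely \(DV\), has no roots modulo high powers of the primes dividing \(v\), so the modulus cannot be enlarged at will. Your route is shorter for this theorem, but it depends on \(H\) having roots modulo every modulus.

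\textbf{Two points to tighten.}
\begin{enumerate}
  \item The saturation theorem requires distinct irreducible factors. Apply it to the squarefree part of \(\Phi\), which inherits the absence of fixed prime divisors, and then use \(\Omega(\Phi)\le D\,\Omega(\Phi_{\mathrm{sf}})\), as in \cref{lem:uniform-saturation}.
  \item Your fallback of sieving only by primes beyond \(L\) is unnecessary. It would also not, by itself, control prime factors below \(L\) that lie outside \(P\).
\end{enumerate}
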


Ding's simultaneous theorem \cite[Theorem 4]{DingProblem26} is the model
case with center \(n\) and linear opponents \(b_i n\pm1\).
\Cref{thm:simultaneous-dominance} allows a general intersective center and
arbitrary polynomial opponents, subject to the stated separation condition.

The proof uses a classical saturation theorem of Halberstam and Richert
\cite[Theorem 10.11]{HalberstamRichert}.  Since our root progressions produce
integer-valued polynomials with varying denominators and coefficients, we
give the reduction to the classical integral-polynomial theorem in full.
This also makes clear that the saturation number is uniform in all
coefficients.

When \(F\) and \(G\) share factors, intersectivity may be carried by their
common part.  To state a necessary condition, factor the primitive parts of
\(F\) and \(G\) into primitive irreducibles over \(\Z\).  Let \(C\) be the
product of the irreducible factors occurring on both sides, and let \(U\)
and \(V\) be the products of the factors occurring only in \(F\) and only
in \(G\), respectively; multiplicities are discarded.  Thus, up to signs,
\[
  \radQ(F)=CU,\qquad \radQ(G)=CV,
\]
and \(C,U,V\) are pairwise coprime.

\begin{theorem}[Common-support obstruction]\label{thm:common-support}
Let \(F,G\in\Z[x]\setminus\{0\}\), and let \(C,U,V\) be their support
factors defined above.  If \(UV\) fails to have a root modulo infinitely many
primes, then \(\cP(F,G)\) does not hold.  Consequently,
\[
  \cP(F,G)\quad\Longrightarrow\quad
  UV\text{ has a root modulo all but finitely many primes}.
\]
\end{theorem}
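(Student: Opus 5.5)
The plan is to build an explicit infinite set \(A\) of primes on which \(d_A(F(n))\) and \(d_A(G(n))\) agree for every large \(n\). Then \(\limsup_{n\to\infty}\delta_A(F(n),G(n))\) is finite and \(\cP(F,G)\) fails.

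\emph{Setup.} Write
\[
F=c_F\,\epsilon_F\prod_i P_i^{a_i},\qquad G=c_G\,\epsilon_G\prod_k Q_k^{b_k},
\]
where \(c_F,c_G\) are the contents, \(\epsilon_F,\epsilon_G=\pm1\), the \(P_i,Q_k\) are primitive irreducibles over \(\Z\), and all exponents are positive. By the definition of the support factors, each \(P_i\) divides \(C\) or \(U\), and each \(Q_k\) divides \(C\) or \(V\), up to sign. Let \(\mathcal S\) be the set of primes \(p\) with \(p\nmid c_Fc_G\) such that \(UV\) has no root modulo \(p\). By hypothesis infinitely many primes fail to give \(UV\) a root, and only finitely many primes divide \(c_Fc_G\), so \(\mathcal S\) is infinite. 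Put \(A=\mathcal S\).

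This also covers the degenerate case \(U=V=1\). There \(UV=1\) has no root modulo any prime, and \(A\) is simply all primes not dividing \(c_Fc_G\).

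\emph{Key step.} Let \(p\in A\) and \(n\in\Z\). Because \(p\) is prime and \(p\nmid c_F\),
\[
p\mid F(n)\iff p\mid P_i(n)\ \text{for some } i .
\]
Every irreducible factor of \(U\) divides \(UV\), so no such factor has a root modulo \(p\). Hence \(p\mid P_i(n)\) can only happen for the \(P_i\) dividing \(C\). Conversely, each irreducible factor of \(C\) occurs among the \(P_i\) with positive multiplicity. Therefore
\[
p\mid F(n)\iff p\mid C(n).
\]
The same argument with \(V\) and \(c_G\) gives \(p\mid G(n)\iff p\mid C(n)\). Since \(A\) consists of primes, \(d_A(m)\) counts exactly the primes of \(A\) dividing \(m\). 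So \(d_A(F(n))=d_A(G(n))\) whenever \(F(n)G(n)\neq0\).

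\emph{Zeros.} Since \(F,G\ne0\), each has finitely many integer roots. So there is \(n_0\) with \(F(n)G(n)\ne0\) for all \(n\ge n_0\). For these \(n\), \(\delta_A(F(n),G(n))=|d_A(F(n))-d_A(G(n))|=0\), hence \(\limsup_{n\to\infty}\delta_A=0<\infty\). This contradicts \(\cP(F,G)\) for the infinite set \(A\). The "consequently" clause is just the contrapositive.

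The only step needing care is the key step, namely that divisibility of an integer value by \(p\) is controlled by the irreducible factors. This rests on \(F(n)\) being literally \(\pm c_F\prod_i P_i(n)^{a_i}\) in \(\Z\) (Gauss's lemma), together with excluding the finitely many primes dividing the contents. I do not expect any genuine obstacle: no analytic input is needed, only the choice of \(A\).
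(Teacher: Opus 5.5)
Your proposal is correct and follows essentially the same route as the paper. You take as test set the infinitely many primes not dividing \(c_Fc_G\) at which \(UV\) has no root, and you derive \(p\mid F(n)\iff p\mid C(n)\iff p\mid G(n)\). The only difference is in the last step: the paper concludes via \cref{prop:identical-roots}, which treats zero values through the unboundedness of \(A\), whereas you discard the finitely many integer zeros by taking \(n\) large.
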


When \(UV\) is nonconstant, let \(L\) be its splitting field, and
let \(\Gamma=\Gal(L/\Q)\) act on its roots.  Away from finitely many primes,
\(UV\) has a root modulo \(p\) exactly when a Frobenius element fixes a root.
Chebotarev therefore yields the following finite-group formulation.

\begin{corollary}[Derangement obstruction]\label[corollary]{cor:derangement}
Suppose that \(UV\) is nonconstant.  If \(\Gamma\) contains an element fixing
no root of \(UV\), then
\(\cP(F,G)\) does not hold.  Thus \(\cP(F,G)\) forces \(\Gamma\) to be the
union of the point stabilizers in its action on the roots of \(UV\).
\end{corollary}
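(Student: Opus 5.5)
The corollary should follow from \cref{thm:common-support} by a Chebotarev argument, so the plan is to manufacture infinitely many primes modulo which \(UV\) has no root. Fix \(\sigma\in\Gamma\) acting without fixed points on the roots of \(UV\). Let \(S\) be the finite set of primes consisting of the primes dividing the leading coefficient of \(UV\), the primes dividing the discriminant of \(\radQ(UV)=UV\), and the primes ramified in \(L\). Since \(U\) and \(V\) are coprime products of distinct irreducibles, \(UV\) is squarefree and its discriminant is a nonzero integer. By the Chebotarev density theorem, the set of primes \(p\notin S\) whose Frobenius conjugacy class in \(\Gamma\) is that of \(\sigma\) has positive density and is therefore infinite.

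Next, for such a prime \(p\notin S\), I will show that \(UV\) has no root modulo \(p\). Because \(p\) does not divide the leading coefficient or the discriminant, the reduction \(\overline{UV}\in\mathbb F_p[x]\) has the same degree and is separable. Reduction modulo a prime \(\mathfrak p\mid p\) of the ring of integers of \(L\) then gives a bijection from the roots \(\alpha_1,\dots,\alpha_k\in L\) of \(UV\) to the roots of \(\overline{UV}\) in \(\overline{\mathbb F}_p\). The roots are \(\mathfrak p\)-integral after scaling by the leading coefficient, which is a unit at \(p\), so the reduction is defined. This bijection is equivariant for the decomposition group \(D_{\mathfrak p}\), where the Frobenius element \(\mathrm{Frob}_{\mathfrak p}\) acts on the reductions as \(y\mapsto y^p\).

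Now suppose \(\bar\alpha_i\) were a root lying in \(\mathbb F_p\). Then \(\bar\alpha_i^{\,p}=\bar\alpha_i\), so \(\mathrm{Frob}_{\mathfrak p}(\alpha_i)\) and \(\alpha_i\) have the same reduction. Injectivity of the reduction map forces \(\mathrm{Frob}_{\mathfrak p}\alpha_i=\alpha_i\). But \(\mathrm{Frob}_{\mathfrak p}\) is conjugate to \(\sigma\), and fixed-point-freeness is a conjugation invariant, so this is a contradiction. Hence \(UV\) has no root modulo any of these infinitely many primes, and \cref{thm:common-support} gives the failure of \(\cP(F,G)\).

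For the reformulation, assume \(\cP(F,G)\) holds. By what was just shown, every \(\tau\in\Gamma\) fixes some root \(\alpha_i\), so \(\tau\in\mathrm{Stab}(\alpha_i)\), and \(\Gamma=\bigcup_i\mathrm{Stab}_\Gamma(\alpha_i)\); the reverse inclusion is trivial.

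The only delicate step is the equivariant, injective reduction of roots at unramified primes not dividing the discriminant. It is standard, but it needs the leading-coefficient and discriminant exclusions stated carefully, since \(UV\) need not be monic.
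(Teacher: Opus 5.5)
Your proof is correct and follows essentially the paper's route. Chebotarev supplies a positive-density set of primes whose Frobenius is a derangement, these primes carry no root of \(UV\), and \cref{thm:common-support} finishes the argument. The only difference is that the paper cites the Frobenius--factorization correspondence and packages it as \cref{prop:frobenius}, whereas you prove directly the one direction you need, via injective, Frobenius-equivariant reduction of roots at primes not dividing the leading coefficient or the discriminant.
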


The local--Galois obstruction is not the end of the common-factor theory.
The following exact family lies beyond both sides of the elementary
factor-support sandwich.

\begin{theorem}[Three-linear-support classification]
\label{thm:linear-defects}
Let
\[
  L(x)=\ell x+\lambda,\qquad
  U(x)=ux+\mu,\qquad
  V(x)=vx+\nu
\]
be primitive nonconstant polynomials in \(\Z[x]\), and suppose that they are
pairwise nonassociate.  Let \(\gamma,\delta\in\Z\setminus\{0\}\), let
\(r_L,r_U,s_L,s_V\) be positive integers, and put
\[
  F=\gamma L^{r_L}U^{r_U},\qquad
  G=\delta L^{s_L}V^{s_V}.
\]
Then
\[
  \cP(F,G)\quad\Longleftrightarrow\quad
  \begin{aligned}
    \gcd(|u|,|v|)&=1,\\
    \min\{\gcd(|\ell|,|u|),\gcd(|\ell|,|v|)\}&=1.
  \end{aligned}
\]
Equivalently, \(UV\) and at least one of \(F,G\) are intersective.
\end{theorem}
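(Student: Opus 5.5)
The proof splits into necessity and sufficiency. First I will check the stated equivalence with intersectivity. A primitive linear polynomial \(ax+b\) has a root modulo \(m\) exactly when \(\gcd(a,m)=1\), because \(\gcd(a,b)=1\). By the Chinese remainder theorem, \(UV\) has a root modulo every prime power iff no prime divides both \(u\) and \(v\), i.e.\ \(\gcd(|u|,|v|)=1\). Likewise \(\radQ(F)=LU\) is intersective iff \(\gcd(|\ell|,|u|)=1\), and similarly for \(G\) with \(v\). So the two displayed conditions say precisely that \(UV\) and one of \(F,G\) are intersective. Note that \cref{thm:common-support} does not apply, since only finitely many primes are excluded here. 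Both directions must therefore be handled by hand at the finitely many bad primes.

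\emph{Necessity.} Suppose first that a prime \(p\) divides both \(u\) and \(v\). Then \(p\nmid U(n)V(n)\) for every \(n\), since both polynomials are primitive. Hence \(v_p(F(n))=v_p(\gamma)+r_Lv_p(L(n))\) and \(v_p(G(n))=v_p(\delta)+s_Lv_p(L(n))\). Take the lacunary set \(A=\{p^{2^k}:k\ge1\}\). Then \(d_A(m)=\#\{k: 2^k\le v_p(m)\}\), which equals \(\log_2 v_p(m)+O(1)\). Consequently \(|d_A(F(n))-d_A(G(n))|\le \log_2(\max(r_L,s_L)/\min(r_L,s_L))+O(1)\) uniformly whenever \(F(n)G(n)\ne0\). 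Only finitely many \(n\) have \(F(n)G(n)=0\), so they do not affect the \(\limsup\). Hence \(\cP(F,G)\) fails.

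Suppose instead that \(\gcd(|u|,|v|)=1\) but primes \(p\mid(\ell,u)\) and \(q\mid(\ell,v)\) exist; then \(p\ne q\). Here \(p\nmid L(n)U(n)\), so \(v_p(F(n))=v_p(\gamma)\) is bounded. Similarly \(q\nmid L(n)V(n)\), so \(v_q(G(n))=v_q(\delta)\) is bounded. Take \(A=\{p^kq^k:k\ge K\}\) with \(K>v_p(\gamma)+v_q(\delta)\). No element of \(A\) then divides \(F(n)\) or \(G(n)\) when these are nonzero, and again \(\cP(F,G)\) fails.

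\emph{Sufficiency.} Assume \(\gcd(|u|,|v|)=1\) and, by symmetry, \(\gcd(|\ell|,|u|)=1\), so that \(LU\) is intersective. Given an infinite \(A\), I will select elements \(a_1,\dots,a_k\in A\) and route each of their prime-power parts to a linear factor of \(F\) that has a root there. Primes not dividing \(u\) go to \(U\); primes dividing \(u\) go to \(L\), which is possible because then \(p\nmid\ell\). This produces a root progression \(n\equiv n_0\pmod M\) with \(a_i\mid F(n)\). On that progression I apply the uniform Halberstam--Richert input, as in the proof of \cref{thm:simultaneous-dominance}. It makes the cofactors of \(L(n)\), \(U(n)\) and \(V(n)\), after the forced local parts are removed, have a bounded number of prime factors. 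Then \(d_A(G(n))\) is bounded by the number of divisors of \(\delta\,\ell_0^{s_L}P^{s_L}V(n)^{s_V}\), where \(\ell_0\) is the forced local part and \(P\) is an almost-prime. Meanwhile \(d_A(F(n))\ge k\).

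The main obstacle is the routing into the shared factor \(L\). Any part of \(a_i\) forced into \(L\) also divides \(G(n)\). Elements of \(A\) that are built from primes dividing \(u\) would then inflate \(d_A(G(n))\) equally, so the difference is not obviously large. I will first try a pigeonhole dichotomy over the finitely many primes dividing \(u\).

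\begin{enumerate}
\item If infinitely many \(a\in A\) have bounded \(u\)-primary part, then, after passing to a subsequence, that part is a single fixed number \(b\). I route \(b\) into \(L\) and the remaining coprime parts into \(U\). The \(L\)-contribution to \(G\) is then a fixed finite number of divisors, and \(d_A(F)-d_A(G)\ge k-O_b(1)\).
\item Otherwise some \(p\mid u\) occurs to unbounded powers in \(A\). Since \(p\nmid v\), such a prime is never supplied by \(V\). I will compare \(v_p(L(n))\)-driven counts with the exponents \(r_L\) and \(s_L\), choosing \(n\) so that \(v_p(L(n))\) is just large enough to capture the chosen elements on the \(F\)-side. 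If \(r_L\le s_L\) this alone fails, and I would then switch to routing through \(G\) when \(\gcd(|\ell|,|v|)=1\).
\end{enumerate}

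I expect that this adaptive choice of side, together with the bookkeeping of multiplicities, is where the real work lies.
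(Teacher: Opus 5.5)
Your necessity argument matches the paper's: a lacunary set of \(p\)-powers when \(p\mid\gcd(u,v)\), and, in the second case, a hand-made version of the two-missing-roots proposition. Your case~(i) of sufficiency also works. With the \(u\)-primary part \(b\) fixed, the fixed-divisor lemma applies with \(H=bU\), which is coprime to \(G\), so uniform saturation bounds \(d_A(G(n))\).

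\textbf{The gap is case~(ii).} It is not mere bookkeeping: the proposed mechanism cannot succeed. Take \(A=\{p^{2^k}:k\ge1\}\) with \(p\mid u\), and put \(\tau=v_p(L(n))\). Since \(U(n)\) is a \(p\)-adic unit, \(v_p(F(n))=v_p(\gamma)+r_L\tau\) exactly, while \(v_p(G(n))\ge v_p(\delta)+s_L\tau\). Your own lacunary computation then bounds \(d_A(F(n))-d_A(G(n))\) from above uniformly in \(n\), whether or not \(r_L>s_L\). So the comparison of \(r_L\) with \(s_L\) is a red herring. For such \(A\) only \(G\) can win, by routing the \(p\)-powers into \(V\). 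That is possible precisely because \(p\nmid v\); your sentence that such a prime ``is never supplied by \(V\)'' has it backwards, since it is \(U\) that never supplies \(p\).

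Your fallback routes through \(G\) only when \(\gcd(|\ell|,|v|)=1\), and even then it reproduces the shared-\(L\) obstacle with the roles reversed. But the theorem asserts \(\cP(F,G)\) also when \(G\) is not intersective. For example, take \(L=3x+1\), \(U=2x+1\), \(V=3x+2\), \(\gamma=\delta=1\), \(s_L\ge r_L\), and \(A=\{2^k:k\ge1\}\). Then \(d_A(G(n))-d_A(F(n))=(s_L-r_L)v_2(3n+1)+s_Vv_2(3n+2)\ge0\) for every \(n\). So \(F\) never wins, and the non-intersective \(G\) must.

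\textbf{The missing idea.} The winning side is decided by the \(v\)-primary parts of the elements of \(A\), not their \(u\)-primary parts. These are measured against thresholds \(\beta_q+s_Lh_q\) for \(q\mid v\), where \(\beta_q=v_q(\delta)\), \(h_q=v_q(R)\), and \(R=\Res(L,U)=\ell\mu-u\lambda\).

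\emph{Case where \(G\) dominates.} Suppose infinitely many \(m\in A\) satisfy \(v_q(m)\le\beta_q+s_Lh_q\) for every \(q\mid v\). After removing \(\gcd(m,|\delta|)\), the remaining \(v\)-part divides the fixed number \(\prod_{q\mid v}q^{s_Lh_q}\). It can be routed into \(L\): a prime \(q\) dividing both \(\ell\) and \(v\) has \(q\nmid u\lambda\), hence \(h_q=0\), so it does not occur. The rest is routed into \(V\). Then \(F\) is controlled by uniform saturation together with the fixed-divisor lemma for the finitely many \(H_D=DV\).

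\emph{Case where \(F\) dominates.} Otherwise, route \(u\)-parts into \(L\) and the rest into \(U\), and also impose \(q^{h_q+1}\mid U(n)\) for every \(q\mid v\). The identity \(uL-\ell U=-R\) with \(q\nmid u\) pins \(v_q(L(n))=h_q\). Since \(q\nmid V(n)\), one gets \(v_q(G(n))=\beta_q+s_Lh_q\) exactly. Each selected element exceeds some threshold, so none divides \(G(n)\), and no sieve is needed on this side.

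The winning side thus depends on \(A\) and can be the non-intersective one. Without this threshold dichotomy and the resultant pinning, your case~(ii) remains unproved.
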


Thus the criterion depends only on the three squarefree support factors, even
though the four exponents, and hence the degrees, are arbitrary.  The proof
introduces an adaptive local-routing mechanism.  Finitely many resultant
valuations define thresholds at the bad primes, and each prescribed divisor
is routed through the linear branch on which the opposing valuation can be
controlled.  This separates the reusable method from the final degree-two
case analysis.

\begin{corollary}[Complete degree-two classification]
\label[corollary]{cor:degree-two}
Let \(F,G\in\Z[x]\setminus\{0\}\) satisfy
\(\deg F,\deg G\le2\), and let \(C,U,V\) be their support factors.  Then
\[
  \cP(F,G)
  \quad\Longleftrightarrow\quad
  \text{\(UV\) and at least one of \(F,G\) are intersective}.
\]
Moreover,
\[
  \cP(F,G)
  \quad\Longleftrightarrow\quad
  \cP\bigl(\radQ(F),\radQ(G)\bigr).
\]
\end{corollary}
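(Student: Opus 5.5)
We prove \cref{cor:degree-two} by a case analysis on the support pattern \((C,U,V)\). Since \(\deg F,\deg G\le2\), the common part \(C\) is either constant, a single linear factor, two linear factors, or an irreducible quadratic. The necessity half of the first equivalence needs two separate inputs. If \(\cP(F,G)\) holds, then \(UV\) has a root modulo almost every prime by \cref{thm:common-support}. Upgrading this to a root modulo every positive integer is the finite \(p\)-adic boundary, which has to be handled by hand in the low-degree patterns. Intersectivity of \(F\) or \(G\) is obtained by a direct counterexample \(A\): if neither \(F\) nor \(G\) is intersective, choose moduli \(m_F,m_G\) with no roots and take \(A\) inside the multiples of \(\lcm(m_F,m_G)\) arranged so that \(d_A\) is bounded on all values. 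This parallels the negative half of \cref{thm:coprime-classification}.

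\emph{Coprime case.} If \(C\) is constant, then \(UV=\radQ(F)\radQ(G)\) up to sign.
\begin{enumerate}
\item First we show that ``\(UV\) intersective'' is implied by ``\(F\) or \(G\) intersective''. In degree at most two, an intersective polynomial that has no rational root must be a product of two linear factors or a quadratic with roots modulo every prime power; the classical examples of the latter are reducible. Hence one of \(U,V\) contains an intersective piece, and multiplying by further factors keeps the product intersective.
\item \cref{thm:coprime-classification} then gives the equivalence.
\item The radical statement follows because intersectivity of \(F\) depends only on \(\radQ(F)\) and the content. This step needs care: a nonunit content \(\gamma\) can only add roots. So the claim reduces to checking that \(F\) intersective implies \(\radQ(F)\) intersective in degree at most two. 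This is done directly via \cref{prop:cubic-intersectivity}.
\end{enumerate}

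\emph{Common-factor cases.}
\begin{enumerate}
\item If \(C\) is an irreducible quadratic, then \(U,V\) are constants. Then \(F/G\) is a constant ratio, so \(d_A(F(n))\) and \(d_A(G(n))\) differ only through the fixed contents. We take \(A\) to be the primes larger than the contents, which gives zero difference, so \(\cP\) fails. Correspondingly, \(UV=1\) is not intersective.
\item If \(C=L\) is linear, then \(U,V\) have degree at most one. When both are linear we are exactly in \cref{thm:linear-defects} with exponents at most \(2\).
\item When one of them, say \(V\), is constant, \(UV=U\) is linear, and intersectivity of \(U\) means \(|u|=1\). Here we argue directly. If \(|u|=1\), the intersective factor \(U\) is coprime to \(G=\delta L^{s}\), and \cref{thm:simultaneous-dominance} applies. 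If \(|u|>1\), then \(U\) misses some residue modulo a prime \(p\mid u\). Taking \(A\) to consist of powers of \(p\), or of multiples of \(p\) combined with primes, shows that \(d_A\) of both sides is controlled by the common \(L\)-part and the bounded valuations coming from \(\Res(L,U)\).
\item If \(C=L_1L_2\) is a product of two linear factors, then \(U,V\) are constant, \(UV=1\), and the same argument as in the first case shows that \(\cP\) fails.
\end{enumerate}

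The radical equivalence in the common-factor cases follows because each criterion obtained depends only on \(C,U,V\) and not on exponents. This is supplied by \cref{thm:linear-defects} and by the exponent-free arguments above.

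The main obstacle is the \(V\)-constant case \(F=\gamma L^{r}U\), \(G=\delta L^{s}\) with \(|u|>1\). There we must build an infinite \(A\) making \(|d_A(F(n))-d_A(G(n))|\) bounded even though the \(L\)-powers differ. The first attempt will be \(A=\{p^k\}\) for a prime \(p\mid u\) with \(p\nmid \ell\), if such a prime exists; this gives \(v_p(U(n))=0\) always. When every prime dividing \(u\) also divides \(\ell\), we instead reduce to \(p\)-adic valuation bookkeeping via the resultant valuation thresholds of \cref{thm:linear-defects}.
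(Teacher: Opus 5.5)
Your case split matches the paper in most cases, but the case you single out as the main obstacle is not resolved, and the test set you propose for it fails.

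\textbf{The gap.} Take $C=L$, $V=1$ and $U=ux+\mu$ with $|u|>1$. The degree bound forces $F=\gamma LU$ and $G=\delta L^s$ with $s\in\{1,2\}$, so the $L$-powers differ exactly when $s=2$. For $A=\{p^k:k\ge1\}$ one has $d_A(m)=v_p(m)$. For a prime $p\mid u$, primitivity makes $U(n)$ a $p$-adic unit, so
\[
  d_A(F(n))-d_A(G(n))=v_p(\gamma)-v_p(\delta)-v_p(L(n)).
\]
When $p\nmid\ell$, the factor $L$ has roots modulo every power of $p$, so this difference is unbounded. Concretely, $F=x(2x+1)$, $G=x^2$ and $A=\{2^k\}$ give the difference $-v_2(n)$. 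Your ``first attempt'' therefore fails in exactly the configuration you flagged.

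Your fallback addresses the wrong subcase. If every prime dividing $u$ also divides $\ell$, primitivity makes both $L(n)$ and $U(n)$ $p$-adic units. Then $v_p(F(n))$ and $v_p(G(n))$ are the constants $v_p(\gamma),v_p(\delta)$, and plain powers of $p$ already work. The resultant thresholds in the proof of \cref{thm:linear-defects} serve its sufficiency direction and do not produce counterexample sets. The phrase ``multiples of $p$ combined with primes'' is not an argument.

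\textbf{The missing idea} is to thin out the powers of $p$. Take the lacunary set $A=\{p^{3^j}:j\ge1\}$ and put $N(X)=\#\{j\ge1:3^j\le X\}$. With $\tau=v_p(L(n))$, the two counts are $N(v_p(\gamma)+\tau)$ and $N(v_p(\delta)+s\tau)$. Since $N(X)=\log_3(X+1)+O(1)$ and the two arguments plus one have bounded ratio, their difference is bounded uniformly in $\tau$. An integer zero of $L$ is a common zero of $F$ and $G$, and $U$ has no integer zero, so the extended difference stays bounded. This works for every prime $p\mid u$, whether or not $p\mid\ell$, and for both values of $s$, with no further splitting. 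It is the device the paper uses (in the mirror case $U=1$), as in the necessity part of \cref{thm:linear-defects}.

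\textbf{Two smaller points.} First, you omit the pattern $C$ linear with $U=V=1$, e.g.\ $F=x$, $G=x^2$. There $F/G$ is not constant, so your case (i) argument does not literally apply, but the test set of primes not dividing the contents still works. Second, in the coprime case your justification that ``$F$ or $G$ intersective'' forces $UV$ to be intersective is garbled. It is immediate: $C=1$ gives $U=\pm\radQ(F)$ and $V=\pm\radQ(G)$, so \cref{lem:radical-intersective} makes $U$ or $V$ intersective, and hence $UV$ is intersective.
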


\begin{corollary}[A normalized quadratic family]
\label[corollary]{cor:normalized-linear}
For \(a,b\in\Z\setminus\{0\}\),
\[
  \cP\bigl(x(ax+1),x(bx+1)\bigr)
  \quad\Longleftrightarrow\quad
  \gcd(|a|,|b|)=1\ \text{ and }\ a\ne b.
\]
\end{corollary}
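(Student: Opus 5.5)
The plan is to reduce the statement directly to \cref{thm:linear-defects}, after setting aside the degenerate case \(a=b\).

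\emph{The case \(a=b\).} Here \(F=G=x(ax+1)\), so \(F(n)=G(n)\) for every \(n\). If \(F(n)=0\), then \(\delta_A(F(n),G(n))=0\) by the first clause of \eqref{eq:extended-difference}. Otherwise \(\delta_A(F(n),G(n))=|d_A(F(n))-d_A(F(n))|=0\). Hence the \(\limsup\) in \eqref{eq:property-P} is \(0\) for every infinite \(A\), and \(\cP(F,G)\) fails. This agrees with the right-hand side, which excludes \(a=b\). The case must be treated separately because when \(a=b=\pm1\) we have \(\gcd(|a|,|b|)=1\), so the gcd condition alone would not detect the failure.

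\emph{The case \(a\ne b\).} Put \(L(x)=x\), \(U(x)=ax+1\) and \(V(x)=bx+1\), that is, \(\ell=1\), \(\lambda=0\), \(u=a\), \(\mu=1\), \(v=b\), \(\nu=1\). We take \(\gamma=\delta=1\) and all exponents \(r_L=r_U=s_L=s_V=1\). Then \(F=L^{1}U^{1}\) and \(G=L^{1}V^{1}\), exactly as in \cref{thm:linear-defects}. We must check the hypotheses.
\begin{enumerate}
  \item Each of \(L,U,V\) is nonconstant, since \(a,b\ne0\).
  \item Each is primitive, since \(L\) has a coefficient \(1\) and \(U,V\) have constant term \(1\).
  \item They are pairwise nonassociate in \(\Z[x]\), where associates differ by a sign.
  \begin{itemize}
    \item \(L\) is not associate to \(U\) or \(V\), because \(L(0)=0\) while \(U(0)=V(0)=1\).
    \item \(U=\pm V\) would force the constant terms to satisfy \(1=\pm1\), so the sign is \(+\), and then \(a=b\), which is excluded.
  \end{itemize}
\end{enumerate}

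\cref{thm:linear-defects} then gives
\[
\cP(F,G)\iff \gcd(|a|,|b|)=1\ \text{ and }\ \min\{\gcd(1,|a|),\gcd(1,|b|)\}=1.
\]
The second condition holds automatically because \(\ell=1\). So for \(a\ne b\) we get \(\cP(F,G)\iff\gcd(|a|,|b|)=1\).

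Combining the two cases gives the stated equivalence. There is no real obstacle. The only points needing care are the separate treatment of \(a=b\) and the nonassociate check, which uses that the common constant term \(1\) rules out the sign \(-1\).
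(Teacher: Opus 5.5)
Your proposal is correct and matches the paper's proof: the case \(a=b\) is dismissed because the two polynomials are identical, and otherwise \cref{thm:linear-defects} is applied with \(L=x\), \(U=ax+1\), \(V=bx+1\) and all four exponents equal to one. Your explicit checks of primitivity and pairwise nonassociateness, and your remark that \(a=b=\pm1\) satisfies the gcd condition, fill in details the paper leaves implicit.
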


This is the natural exceptional boundary.  Berend and Bilu
\cite{BerendBilu} gave an effective local-and-Galois criterion for a
polynomial to have roots modulo every integer, together with the
corresponding prime-density formula.  More recent classification results for
special families of prime-covering and intersective polynomials appear in
\cite{ElsholtzKlahnTechnau,Mishra2024}.  The point here is that the quotient
support \(UV\), rather than \(F\) or \(G\) separately, governs the first
common-factor obstruction.

The paper is organized as follows.  \Cref{sec:local} fixes the local
language and records elementary obstructions.  \Cref{sec:sieve} proves the
uniform almost-prime and fixed-divisor lemmas.  The simultaneous theorem and
coprime classification are proved in \cref{sec:positive}.  Common supports
and the Galois obstruction are treated in \cref{sec:common}, together with a
low-degree intersectivity test.  The final section proves
\cref{thm:linear-defects,cor:degree-two}, completes the degree-two part of
the exceptional \(p\)-adic boundary, and contrasts a prime-covering but
non-intersective example with a genuinely intersective one.

\section{Local language and elementary obstructions}\label{sec:local}

\begin{definition}
A polynomial \(H\in\Z[x]\) is \emph{intersective} if for every \(m\ge1\)
there exists \(n\in\Z\) such that \(H(n)\equiv0\pmod m\).
\end{definition}

For a nonzero \(P\in\Z[x]\) and \(m\ge1\), write
\[
  Z_P(m)=\{x\bmod m:P(x)\equiv0\pmod m\}.
\]
Intersectivity is equivalent to the existence of a root in \(\Z_p\) for
every prime \(p\).  It is unchanged when polynomial multiplicities are
discarded.

\begin{lemma}[Multiplicity does not affect intersectivity]
\label[lemma]{lem:radical-intersective}
For every nonzero \(P\in\Z[x]\), the polynomial \(P\) is intersective if
and only if its squarefree radical over \(\Q\), represented primitively in
\(\Z[x]\), is intersective.
\end{lemma}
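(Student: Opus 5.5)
We use the local reformulation recorded just before the statement: a polynomial is intersective if and only if it has a root in \(\Z_p\) for every prime \(p\). Write \(P=c\prod_i P_i^{e_i}\) with \(c\in\Z\setminus\{0\}\), the \(P_i\) pairwise nonassociate primitive irreducibles in \(\Z[x]\), and \(e_i\ge1\). Then \(R=\prod_i P_i\) is the primitive squarefree radical. It therefore suffices to show, for each prime \(p\), that \(P\) has a root in \(\Z_p\) if and only if \(R\) does.

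First I would justify the local reformulation, since it is only asserted in the text. If \(P(n_k)\equiv0\pmod{p^k}\) for every \(k\), compactness of \(\Z_p\) gives a convergent subsequence \(n_k\to\alpha\), and continuity forces \(P(\alpha)=0\). Conversely, given roots \(\alpha_p\in\Z_p\), truncate each modulo \(p^{v_p(m)}\) and combine them by the Chinese remainder theorem to obtain a root modulo \(m\).

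The \(p\)-adic equivalence is now immediate. Since \(\Z_p\) is an integral domain and \(c\neq0\), we have \(P(\alpha)=0\) if and only if \(P_i(\alpha)^{e_i}=0\) for some \(i\), that is, if and only if \(P_i(\alpha)=0\) for some \(i\), that is, if and only if \(R(\alpha)=0\). The content of \(R\) plays no role, so the primitive normalization is harmless.

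The only real point is to avoid the naive route through roots modulo \(m\). There, multiplicities and contents genuinely matter: \(x^2\) has a root modulo \(p^2\) coming from \(x\equiv p\), which is not a root of \(x\) modulo \(p^2\), and \(2x\) has extra roots modulo \(2\). Passing to exact roots in \(\Z_p\) removes both issues, because the constant \(c\) is a nonzero divisor. Beyond that, the only care needed is the degenerate case where \(P\) is a nonzero constant. Then \(R=1\), and neither is intersective, since \(1\) has no root modulo \(2\) and \(c\) has no root modulo \(m\) once \(m\nmid c\). So the statement holds in that case as well.
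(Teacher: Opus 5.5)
Your proof is correct and follows essentially the same route as the paper: pass from roots modulo all powers of $p$ to a zero in $\Z_p$ by compactness, use that $\Z_p$ is an integral domain to move between $P$ and the product of its distinct primitive irreducible factors, and reassemble with the Chinese remainder theorem. Your separate treatment of the constant case is redundant, since the $p$-adic argument already covers it. Your explicit proof of both directions of the local reformulation just spells out steps the paper leaves implicit.
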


\begin{proof}
Factor the primitive part of \(P\) as
\[
  \prod_{i=1}^r f_i^{e_i}
\]
with distinct primitive irreducibles.  Fix a prime \(p\).  If \(P\) has
roots modulo every power of \(p\), compactness of \(\Z_p\) gives a
\(p\)-adic zero \(z\) of \(P\).  Since \(\Z_p\) is an integral domain,
\(f_i(z)=0\) for some \(i\), and hence the squarefree product
\(\prod_i f_i\) has a \(p\)-adic zero.  Conversely, a zero of the squarefree
product is a zero of \(P\), because the squarefree product divides the
primitive part of \(P\) in \(\Z[x]\).  Applying
this at every prime and using the Chinese remainder theorem proves the
claim.
\end{proof}

The following degree-two criterion is known; see
\cite[Proposition 1 and Appendix B.1]{HamelLyallRice}.  We include the short
proof to specify the normalization used later.

\begin{proposition}[Intersective quadratics]\label[proposition]{prop:quadratic}
Let
\[
  P(x)=ax^2+bx+c\in\Z[x],\qquad a\ne0.
\]
Then \(P\) is intersective if and only if its discriminant is a square and,
on writing its rational roots in lowest terms as
\[
  r_1=\frac{u_1}{v_1},\qquad r_2=\frac{u_2}{v_2},
  \qquad v_1,v_2>0,
\]
one has \(\gcd(v_1,v_2)=1\).
\end{proposition}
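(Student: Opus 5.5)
The plan is to work prime by prime, using the equivalence recorded above: \(P\) is intersective if and only if it has a zero in \(\Z_p\) for every prime \(p\).

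\emph{Necessity of a square discriminant.} Suppose \(P\) is intersective. Then \(P\) has a root modulo every prime \(p\). If \(P\) were irreducible over \(\Q\) (discriminant \(D\) not a square), pick a prime \(p\nmid 2aD\) with \(\left(\frac{D}{p}\right)=-1\). Such primes exist by quadratic reciprocity and Dirichlet, or by Chebotarev applied to \(\Q(\sqrt D)\). At such \(p\), completing the square shows \(P\) has no root mod \(p\), a contradiction. Hence \(D\) is a square, and \(P\) factors over \(\Q\) with rational roots \(r_1,r_2\) (possibly equal).

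\emph{Reduction to linear factors.} By Gauss's lemma we write
\[
  P=c_0(v_1x-u_1)(v_2x-u_2)
\]
with \(c_0\in\Z\setminus\{0\}\) and \(\gcd(u_i,v_i)=1\). The content \(c_0\) is irrelevant for \(p\)-adic zeros, since a zero is a zero. Because \(\Z_p\) is a domain, \(P\) has a zero in \(\Z_p\) if and only if some factor \(v_ix-u_i\) does. That factor has a zero in \(\Z_p\) exactly when \(u_i/v_i\in\Z_p\), that is, when \(p\nmid v_i\): if \(p\mid v_i\) then \(p\nmid u_i\), and \(v_ix-u_i\) is never divisible by \(p\).

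\emph{Conclusion.} Thus \(P\) has a \(p\)-adic zero if and only if \(p\nmid v_1\) or \(p\nmid v_2\). This holds for every \(p\) if and only if no prime divides both \(v_1\) and \(v_2\), i.e.\ \(\gcd(v_1,v_2)=1\). For the converse direction, \(D\) square together with \(\gcd(v_1,v_2)=1\) gives a \(p\)-adic zero at every prime. Taking such zeros modulo \(p^k\) and combining them by the Chinese remainder theorem gives a root modulo every \(m\). The double-root case \(r_1=r_2\) is covered automatically: there \(v_1=v_2\), so the condition forces \(v_1=1\), and indeed \(P=c_0(x-u_1)^2\) is intersective. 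This is consistent with \cref{lem:radical-intersective}.

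\emph{Main obstacle.} The only nonelementary step is producing a prime at which a nonsquare \(D\) is a nonresidue. For this I would cite the standard fact that a nonsquare integer is a quadratic nonresidue modulo infinitely many primes; this is classical, e.g.\ via Dirichlet and reciprocity. Everything else is a routine local computation.
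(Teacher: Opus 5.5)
Your proposal is correct and follows essentially the same route as the paper: a quadratic-character argument forces a square discriminant, Gauss's lemma gives primitive linear factors with denominators \(v_1,v_2\), and a prime-by-prime analysis of those factors combined with the Chinese remainder theorem finishes the proof. The only difference is cosmetic: you phrase the local obstruction through \(\Z_p\)-zeros, whereas the paper exhibits an explicit failing modulus \(p^{v_p(K)+1}\) from the constancy of \(v_p(P(x))\) when \(p\) divides both leading coefficients.
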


\begin{proof}
If the discriminant is not a square, its associated nontrivial quadratic
character is negative at infinitely many primes.  At every prime away from
\(2a\) and the discriminant where that character is negative, \(P\) has no
root.  Thus an intersective quadratic factors over \(\Q\).

By Gauss's lemma, write
\[
  P(x)=K(A_1x+B_1)(A_2x+B_2)
\]
with \(K\in\Z\setminus\{0\}\) and
\(\gcd(A_i,B_i)=1\).  If a prime \(p\) divides both \(A_1\) and \(A_2\),
then both linear factors are \(p\)-adic units for every integer \(x\).
Consequently \(v_p(P(x))=v_p(K)\) for all \(x\), so \(P\) has no root
modulo \(p^{v_p(K)+1}\).

Conversely, suppose \(\gcd(A_1,A_2)=1\).  For every prime power \(p^k\),
at least one \(A_i\) is a unit modulo \(p\), and the corresponding linear
factor has a root modulo \(p^k\).  The Chinese remainder theorem combines
these choices for an arbitrary modulus.  Finally, the reduced root
denominators are \(|A_1|\) and \(|A_2|\).
\end{proof}

\begin{proposition}[Two missing local roots]
\label[proposition]{prop:both-nonintersective}
Let \(F,G\in\Z[x]\setminus\{0\}\).  If neither polynomial is intersective,
then \(\cP(F,G)\) does not hold.
\end{proposition}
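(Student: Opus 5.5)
We must construct an infinite set \(A\subset\N\) for which \(\delta_A(F(n),G(n))\) stays bounded. The plan is to build \(A\) from moduli that divide neither \(F(n)\) nor \(G(n)\) for any \(n\), so that both divisor counts are bounded by a fixed finite contribution.

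First, since \(F\) is not intersective, there is a modulus \(m_F\ge1\) with \(Z_F(m_F)=\emptyset\). Likewise there is \(m_G\) with \(Z_G(m_G)=\emptyset\). Put \(M=\lcm(m_F,m_G)\). Then no integer \(n\) has \(M\mid F(n)\) or \(M\mid G(n)\), because \(m_F\mid M\) and \(m_G\mid M\). In particular \(F(n)\ne0\) and \(G(n)\ne0\) for every integer \(n\), since \(M\) divides \(0\). So \(\delta_A(F(n),G(n))=|d_A(F(n))-d_A(G(n))|\) for all \(n\), and no infinite-valued cases occur.

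Next, take
\[
  A=\{M,2M,3M,\ldots\}=M\N,
\]
which is infinite. If some \(kM\in A\) divided \(F(n)\), then \(M\) would divide \(F(n)\), which is impossible. Hence \(d_A(F(n))=0\) for every \(n\), and likewise \(d_A(G(n))=0\). Therefore \(\delta_A(F(n),G(n))=0\) for all \(n\), so the \(\limsup\) in \eqref{eq:property-P} is \(0\), not \(\infty\). This shows that \(\cP(F,G)\) fails.

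There is essentially no obstacle here. The only point needing care is the zero convention in \eqref{eq:extended-difference}, and it is handled by observing that non-intersectivity excludes integer roots: a root \(n\) would satisfy \(m\mid F(n)=0\) for every \(m\).
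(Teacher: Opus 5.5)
Your proof is correct and matches the paper's argument. The paper likewise takes \(M=\lcm(q,r)\) from two root-free moduli and uses the same obstruction to exclude integer zeros; its only difference is the test set \(\{M^k:k\ge1\}\) in place of your \(M\N\), which does not matter because every element of either set is a multiple of \(M\).
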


\begin{proof}
Choose \(q,r\ge2\) such that \(F\) has no root modulo \(q\) and \(G\) has
no root modulo \(r\).  Put \(M=\lcm(q,r)\) and
\[
  A=\{M^k:k\ge1\}.
\]
No member of \(A\) divides a value of \(F\) or \(G\).  The same local
obstructions rule out integer zeros.  Hence
\(\delta_A(F(n),G(n))=0\) for every \(n\).
\end{proof}

\begin{proposition}[Identical local root sets]
\label[proposition]{prop:identical-roots}
Suppose that \(F,G\in\Z[x]\setminus\{0\}\) and there is an infinite
\(A\subset\N\) such that
\[
  Z_F(a)=Z_G(a)\qquad(a\in A).
\]
Then \(\cP(F,G)\) does not hold.
\end{proposition}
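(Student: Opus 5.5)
The plan is to take the infinite set \(A\) from the hypothesis itself as the witness against \(\cP(F,G)\). I will show that \(\delta_A(F(n),G(n))=0\) for every integer \(n\); then the \(\limsup\) in \eqref{eq:property-P} is \(0\), not \(\infty\).

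The key step is to translate divisibility into membership in the local root sets. For \(a\in A\) and \(n\in\Z\), the definition of \(Z_P(a)\) gives
\[
  a\mid F(n)\iff n\bmod a\in Z_F(a)\iff n\bmod a\in Z_G(a)\iff a\mid G(n),
\]
where the middle equivalence is the hypothesis \(Z_F(a)=Z_G(a)\). So for each \(n\) the sets \(\{a\in A:a\mid F(n)\}\) and \(\{a\in A:a\mid G(n)\}\) coincide, and in particular \(d_A(F(n))=d_A(G(n))\) as cardinals.

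It remains to check the three cases of the extended difference \eqref{eq:extended-difference}.
\begin{enumerate}
  \item If \(F(n)=G(n)=0\), then \(\delta_A=0\) by definition.
  \item If \(F(n)G(n)\ne0\), both counts are finite, since a nonzero integer has finitely many divisors. They are equal, so \(\delta_A=|d_A(F(n))-d_A(G(n))|=0\).
  \item Suppose exactly one value vanishes, say \(F(n)=0\ne G(n)\). Every \(a\in A\) divides \(0=F(n)\), so by the equivalence above every \(a\in A\) divides \(G(n)\). Since \(A\) is infinite, this gives infinitely many divisors of the nonzero integer \(G(n)\), a contradiction. The case \(G(n)=0\ne F(n)\) is symmetric, so this case never occurs.
\end{enumerate}
Hence \(\delta_A(F(n),G(n))=0\) for all \(n\), and \(\cP(F,G)\) fails.

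There is no real obstacle in this argument. The only point needing care is the mixed-zero case, where \(\delta_A\) is defined to be \(\infty\). It is excluded precisely because \(A\) is infinite, which is exactly the hypothesis available.
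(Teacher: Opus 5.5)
Your proposal is correct and follows the paper's proof essentially step for step. You use the given \(A\) as the witness, derive \(a\mid F(n)\iff a\mid G(n)\) from equality of the root sets, and rule out the mixed-zero case because the infinite (hence unbounded) set \(A\) cannot consist of divisors of a nonzero integer. This gives \(\delta_A(F(n),G(n))=0\) for every \(n\).
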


\begin{proof}
For every \(a\in A\) and \(n\in\Z\),
\[
  a\mid F(n)\quad\Longleftrightarrow\quad a\mid G(n).
\]
Thus the two finite divisor counts agree whenever \(F(n)G(n)\ne0\).
If one value is zero, every \(a\in A\) divides the other value; since
\(A\) is unbounded, the other value is zero as well.  The definition
\eqref{eq:extended-difference} now gives
\(\delta_A(F(n),G(n))=0\) for every \(n\).
\end{proof}

\begin{remark}[Agreement and disagreement of local root sets]
\label[remark]{rem:root-set-levels}
Distinct polynomials can have identical root sets along an infinite family of
moduli without having identical root data at every modulus.  For example, let
\(F(x)=x\) and \(G(x)=x^2\).  For every prime \(p\),
\[
  Z_F(p)=Z_G(p)=\{0\bmod p\},
\]
so \cref{prop:identical-roots} applies with \(A\) equal to the set of primes.
On the other hand, for \(k\ge2\),
\[
  Z_F(p^k)=\{0\bmod p^k\},\qquad
  Z_G(p^k)=
  \{x\bmod p^k:p^{\lceil k/2\rceil}\mid x\},
\]
and these sets are different.  Thus the hypothesis of
\cref{prop:identical-roots} is a condition on the selected moduli; it neither
requires the polynomials to be associates nor asserts equality at all
prime-power levels.
\end{remark}

\section{Uniform almost-prime values on root progressions}
\label{sec:sieve}

For a nonzero integer \(m\), let \(\Omega(m)\) be the number of prime
factors of \(|m|\), counted with multiplicity.  If
\(Q\in\Q[x]\) is integer-valued, define its fixed divisor by
\[
  \fd(Q)=\gcd\{Q(n):n\in\Z\},
\]
taking the positive gcd.

We use the following explicit form of the classical saturation theorem.  It
is the first of the two admissible numerical alternatives in
\cite[Theorem~10.11, pp.~310--311]{HalberstamRichert}.

\begin{hrsaturation}
Let \(f_1,\ldots,f_g\in\Z[x]\) be distinct irreducible polynomials with
positive leading coefficients.  Put
\[
  f=\prod_{i=1}^g f_i,\qquad E=\deg f,
  \qquad \rho(p)=\#\{a\bmod p:f(a)\equiv0\pmod p\}.
\]
Assume that \(\rho(p)<p\) for every prime \(p\).  If the integer \(b\)
satisfies
\begin{equation}\label{eq:HR-bound}
  b>E-1+g\sum_{j=1}^g\frac1j
      +g\log\left(\frac{2E}{g}+\frac1{g+1}\right),
\end{equation}
then there is a constant \(\delta=\delta(b,f)>0\) such that, as
\(x\to\infty\),
\[
  \#\{1\le n\le x:f(n)=P_b\}
  \ge \delta\frac{x}{(\log x)^g}
      \left(1+O_f\left(\frac1{\log\log x}\right)\right).
\]
Here \(P_b\) denotes a positive integer with at most \(b\) prime factors,
counted with multiplicity.  In particular, \(\Omega(f(n))\le b\) for
infinitely many positive integers \(n\).
\end{hrsaturation}

The threshold in \eqref{eq:HR-bound} depends only on the number and degrees
of the irreducible factors.  The density constant and the starting point may
depend on their coefficients; only the almost-prime bound will need to be
uniform below.

\begin{lemma}[Uniform saturation for integer-valued polynomials]
\label[lemma]{lem:uniform-saturation}
For each \(D\ge1\), there is an integer \(R_D\) with the following property.
If \(Q\in\Q[x]\) is a nonconstant integer-valued polynomial of degree at
most \(D\) and \(\fd(Q)=1\), then there are infinitely many positive
integers \(y\) such that
\[
  Q(y)\ne0,\qquad \Omega(Q(y))\le R_D.
\]
One possible choice is
\[
  R_D=Db_D,\qquad
  b_D=1+\left\lfloor
  D-1+D\sum_{j=1}^D\frac1j+D\log(2D+1)
  \right\rfloor.
\]
\end{lemma}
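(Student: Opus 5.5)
The plan is to reduce to the Halberstam--Richert saturation theorem by a single linear substitution \(y=Nz+t\). This substitution should simultaneously remove every fixed prime divisor and every troublesome constant. All primes where anything can go wrong are collected into a finite set \(S\) and controlled \(p\)-adically, so that outside \(S\) the valuation of \(Q(y)\) is read off from an integral squarefree product to which the theorem applies.

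\emph{Setup.} Let \(m\ge1\) be minimal with \(P=mQ\in\Z[x]\). Factor
\[
  P=c\prod_{i=1}^g f_i^{e_i}
\]
with \(c\in\Z\setminus\{0\}\) and distinct primitive irreducibles \(f_i\). Then \(\sum e_i\deg f_i\le D\), so \(g\le E:=\sum\deg f_i\le D\) and each \(e_i\le D\). Let \(S\) be the finite set of primes \(p\) with \(p\le D\), or \(p\mid mc\), or \(p\) dividing some leading coefficient of some \(f_i\).

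\emph{Choice of \(t\) and \(N\).} For each \(p\in S\), the hypothesis \(\fd(Q)=1\) gives \(t_p\in\Z\) with \(p\nmid Q(t_p)\); in particular \(f_i(t_p)\ne0\) for all \(i\). Choose \(k_p\) larger than \(v_p(m)\) and than every \(v_p(f_i(t_p))\), and put \(N=\prod_{p\in S}p^{k_p}\). By the Chinese remainder theorem pick \(t\equiv t_p\pmod{p^{k_p}}\) for all \(p\in S\). Since \(P(Nz+t)-P(t_p)\) is divisible by \(p^{k_p}\), dividing by \(m\) gives \(Q(Nz+t)\equiv Q(t_p)\not\equiv0\pmod p\) for every \(z\). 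The same argument gives \(v_p(f_i(Nz+t))=v_p(f_i(t))\) for all \(z\).

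\emph{The integral product.} Let \(d_i\) be the \(S\)-part of \(f_i(t)\), and set
\[
  g_i(z)=\pm f_i(Nz+t)/d_i,
\]
with the sign chosen to make the leading coefficient positive. Since every nonconstant coefficient of \(f_i(Nz+t)\) is divisible by \(N\), and \(N\) is divisible by \(d_i\), each \(g_i\in\Z[z]\). The \(g_i\) are irreducible and pairwise distinct (nonassociate), because the \(f_i\) are. Moreover \(g_i(z)\) is a \(p\)-adic unit for every \(p\in S\), so \(\rho_{g}(p)=0\) there, where \(g=\prod g_i\). For \(p\notin S\), \(g\) is nonzero modulo \(p\): the leading coefficient of \(g_i\) is \(N^{\deg f_i}\) times that of \(f_i\), divided by an \(S\)-unit, hence prime to \(p\). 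Since \(p>D\ge E\), this gives \(\rho_g(p)\le E<p\).

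\emph{Comparing valuations.} For \(p\notin S\), the relation \(mQ(Nz+t)=c\prod f_i(Nz+t)^{e_i}\) involves only constants that are \(S\)-units, so
\[
  v_p(Q(Nz+t))=\sum_i e_i\,v_p(g_i(z)).
\]
For \(p\in S\) both sides vanish. Hence, whenever \(g(z)\ne0\),
\[
  \Omega(Q(Nz+t))\le D\,\Omega(g(z)).
\]

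\emph{Applying saturation.} Apply the Halberstam--Richert theorem to \(g\) with \(b=b_D\). This yields infinitely many positive \(z\) with \(0<g(z)\) and \(\Omega(g(z))\le b_D\). For large \(z\) the values \(y=Nz+t\) are positive, \(Q(y)\ne0\) (as \(g(z)\ne0\)), and \(\Omega(Q(y))\le Db_D=R_D\).

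The main obstacle is the bookkeeping in the comparison step: the constants \(c\), \(m\) and the contents \(d_i\) depend on the coefficients, and uniformity holds only because all of them are confined to \(S\), where \(Q\) itself is a unit. The remaining check is that \(b_D\) exceeds the right side of \eqref{eq:HR-bound} for every \(1\le g\le E\le D\). For this I would verify that the bound is increasing in \(E\) and at most its value at \(g=E=D\). The key point is that
\[
  g\log\Bigl(\frac{2E}{g}+\frac1{g+1}\Bigr)\le D\log(2D+1),
\]
which follows because \(g\log(2E/g+1)\) is increasing in \(g\) for fixed \(E\).
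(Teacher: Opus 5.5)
Your proof is correct and follows essentially the same route as the paper: a linear substitution that clears the denominator and removes every fixed prime divisor, followed by Halberstam--Richert applied to the squarefree product with \(b=b_D\) and the multiplicity bound \(e_i\le D\). The difference is only in bookkeeping. The paper uses just the denominator as the modulus and gets \(\rho(p)<p\) for \(q_{\mathrm{sf}}\) from \(\fd(q)=1\). You instead enlarge the modulus to include all primes \(p\le D\) and the leading-coefficient primes, so that \(\rho(p)<p\) can be checked directly; the same check also shows each \(g_i\) is primitive, hence irreducible over \(\Z\).
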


\begin{proof}
We reduce the assertion to the preceding saturation theorem.  Let
\(d=\deg Q\le D\).
Choose a positive integer \(c\) such that
\[
  P(x)=cQ(x)\in\Z[x].
\]
Since \(\fd(Q)=1\), one has \(\fd(P)=c\).  For each prime \(p\mid c\),
choose \(a_p\in\Z\) such that
\[
  v_p(P(a_p))=v_p(c).
\]
By the Chinese remainder theorem, choose \(a\) satisfying
\[
  a\equiv a_p\pmod {p^{v_p(c)+1}}\qquad(p\mid c).
\]
We claim that
\[
  q(z)=Q(a+cz)=\frac{P(a+cz)}{c}
\]
belongs to \(\Z[z]\) and has no fixed prime divisor.

The difference \(P(a+cz)-P(a)\) has every coefficient divisible by \(c\),
while \(P(a)/c=Q(a)\) is an integer; hence \(q\in\Z[z]\).
If \(p\mid c\), congruence preservation and the choice of \(a\) give
\[
  q(0)=\frac{P(a)}c\not\equiv0\pmod p.
\]
If \(p\nmid c\) divided every value of \(q\), then, since multiplication by
\(c\) permutes the residue classes modulo \(p\), the prime \(p\) would divide
\(P(n)\) for every integer \(n\).  This contradicts \(\fd(P)=c\).  The
claim follows.

Factor
\[
  q(z)=\pm\prod_{i=1}^r f_i(z)^{e_i},
\]
where the \(f_i\) are distinct primitive irreducible polynomials with
positive leading coefficients.  There is no nontrivial constant content,
because \(q\) has no fixed prime divisor.  The squarefree product
\[
  q_{\mathrm{sf}}(z)=\prod_{i=1}^r f_i(z)
\]
also has no fixed prime divisor.

For a prime \(p\), let \(\rho(p)\) denote the number of roots of
\(q_{\mathrm{sf}}\) modulo \(p\).  The absence of a fixed prime divisor says
\(\rho(p)<p\) for every \(p\).  For each admissible integer \(b\), the
saturation theorem supplies infinitely many positive \(z\) for which
\[
  \Omega(q_{\mathrm{sf}}(z))\le b.
\]
Its positivity convention causes no issue, since
\(q_{\mathrm{sf}}(z)>0\) for all sufficiently large positive \(z\).  Since
\(\sum_i e_i\deg f_i=d\), we have \(e_i\le d\), and
therefore
\[
  \Omega(q(z))
  =\sum_{i=1}^r e_i\Omega(f_i(z))
  \le d\,\Omega(q_{\mathrm{sf}}(z)).
\]
Let \(E=\deg q_{\mathrm{sf}}\le D\) and \(r\le D\).  The theorem allows any
integer strictly larger than
\[
  E-1+r\sum_{j=1}^r\frac1j+
  r\log\left(\frac{2E}{r}+\frac1{r+1}\right).
\]
This quantity is strictly smaller than \(b_D\), so \(b=b_D\) is admissible.
Consequently,
\[
  \Omega(q(z))\le d b_D\le D b_D=R_D.
\]
The notation \(P_r\) in
\cite[p. 242]{HalberstamRichert} counts prime factors with multiplicity,
which is the function \(\Omega\) used here.  Finally, the corresponding
arguments \(y=a+cz\) are distinct and positive for all sufficiently large
positive \(z\), completing the proof.
\end{proof}

\begin{remark}
The starting point from which the infinitely many \(y\) occur may depend on
the coefficients of \(Q\).  Only the bound \(R_D\) is uniform.  This is
exactly the uniformity required below, because the progression polynomial
changes with the number of forced divisors.
\end{remark}

\begin{lemma}[Fixed divisors on root progressions]
\label[lemma]{lem:fixed-divisor}
Let \(H,K\in\Z[x]\setminus\{0\}\) be coprime in \(\Q[x]\).  There is a
constant \(C_0=C_0(H,K)\) such that, for all \(a\in\Z\) and \(M\ge1\)
satisfying \(M\mid H(a)\),
\[
  \fd\bigl(K(a+M\,\cdot)\bigr)\le C_0.
\]
\end{lemma}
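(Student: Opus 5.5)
The approach is to use a resultant identity. Since \(H\) and \(K\) are coprime in \(\Q[x]\), there are \(S,T\in\Z[x]\) and a nonzero integer \(\rho\) (for instance \(\rho=\Res(H,K)\), or the least positive integer in the ideal \((H,K)\cap\Z\)) with
\[
  S(x)H(x)+T(x)K(x)=\rho .
\]
We will show that \(\fd(K(a+M\,\cdot))\) divides \(\rho\,\fd(K)\cdot(\text{bounded factor})\). A cleaner bound is available, and we aim for \(C_0=|\rho|\). The degenerate case where \(K\) is a nonzero constant is trivial: there \(\fd=|K|\), and \(C_0=|K|\) works, which also agrees with \(\rho\) up to the choice made.

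Let \(d=\fd(K(a+M\,\cdot))\), and take a prime power \(p^e\) exactly dividing \(d\). For every integer \(z\) put \(x=a+Mz\). Then \(p^e\mid K(x)\), so \(S(x)H(x)\equiv\rho\pmod{p^e}\). We want to force \(p^e\mid\rho\), or at least \(v_p\) bounded by \(v_p(\rho)\) plus a constant depending only on \(H,K\). The key is to use \(M\mid H(a)\): we have \(H(a+Mz)\equiv H(a)\equiv0\pmod{\gcd(M,\cdot)}\). But \(p\) need not divide \(M\), so we split into two cases.

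\emph{Case \(p\mid M\).} Then \(p^{\min(e,v_p(M))}\) divides \(H(x)\) for all such \(x\). Taking \(z=0\) gives \(p^{\min(e,v_p(M))}\mid\rho\). If \(e\le v_p(M)\), this yields \(p^e\mid\rho\). If instead \(e>v_p(M)\), then \(p^{v_p(M)}\mid\rho\) and \(v_p(M)\le v_p(\rho)\). In that subcase, write \(K(a+Mz)=\sum_k K^{(k)}(a)/k!\,M^kz^k\). The fixed divisor of a polynomial in \(z\) divides \(\lcm\) of finitely many values, and it is bounded via the standard fact that \(\fd(g)\) divides \(k!\,c_k\) for the coefficients in the binomial basis. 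Hence \(p^e\) divides \(\deg K!\cdot\gcd_k(\text{coefficients})\), which is at most \(\deg K!\cdot\gcd(K(a),M\cdot\text{content})\). Since \(v_p(M)\le v_p(\rho)\), the powers of \(M\) contribute at most \(v_p(\rho)\) per term beyond the constant, and the constant term \(K(a)\) satisfies \(v_p(K(a))\le v_p(\rho)+\) something. Indeed \(p^{v_p(M)}\mid H(a)\) gives \(S(a)H(a)+T(a)K(a)=\rho\), so \(\min(v_p(H(a)),v_p(K(a)))\le v_p(\rho)\). This step is the main obstacle. The cleanest route is to note that the gcd of the binomial-basis coefficients \(\Delta^k q(0)\) of \(q(z)=K(a+Mz)\) includes \(q(0)=K(a)\) and \(\Delta q(0)=K(a+M)-K(a)\). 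The latter is divisible by \(M\) but has controlled valuation only through \(K'\). For this we use a second Bezout identity between \(H\) and \(K'\)-type data, or more simply handle it as in the next case.

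\emph{Case \(p\nmid M\).} The map \(z\mapsto a+Mz\) hits every residue modulo \(p^e\). So \(p^e\) divides \(K(n)\) for all \(n\), hence \(p^e\mid\fd(K)\), and \(\fd(K)\) is a constant depending on \(K\) only.

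Simplification to avoid the obstacle: in all cases, \(d\) divides \(q(0)=K(a)\) and \(d\mid\fd(K(a+M\cdot))\). The part of \(d\) coprime to \(M\) divides \(\fd(K)\). For the \(M\)-part, \(d_M\mid K(a)\) and \(d_M\mid M^\infty\). Let \(g=\gcd(d_M,M)\). Then \(g\mid H(a)\) and \(g\mid K(a)\), so \(g\mid\rho\). Moreover, every prime of \(d_M\) divides \(M\), hence divides \(g\), hence divides \(\rho\). So only primes dividing \(\rho\) appear, and \(\prod_p p^{v_p(M)}\) is bounded by \(|\rho|\) on those primes where \(v_p(M)\le v_p(\rho)\). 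For primes \(p\mid\rho\) with \(v_p(M)>v_p(\rho)\), we bound \(v_p(d)\) by \(v_p(K(a))\le v_p(\rho)\); this uses \(v_p(H(a))\ge v_p(M)>v_p(\rho)\) together with the Bezout identity. For primes with \(v_p(M)\le v_p(\rho)\), we bound \(v_p(d)\le v_p(\fd\text{ in the binomial basis})\) by noting that \(d\) divides \(\Delta^k q(0)\), which is divisible by \(M^k\) times integers. I expect the remaining bound to come out as \(v_p(d)\le v_p(\deg K!)+\deg K\cdot v_p(\rho)+v_p(\operatorname{cont}K)\). Putting everything together, \(C_0=\fd(K)\cdot(\deg K)!\cdot|\operatorname{cont}K|\cdot|\rho|^{\deg K+1}\) should suffice, and it depends only on \(H,K\).
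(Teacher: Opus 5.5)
Your strategy is sound, and after one repair it gives a valid proof with a more explicit constant than the paper's. As written, though, the decisive step is not proved.

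\textbf{The gap.} In your final ``Simplification'' paragraph, the split into \(p\nmid M\), \(v_p(M)>v_p(\rho)\), and \(1\le v_p(M)\le v_p(\rho)\) is the right one. The first two cases are correct; the second works because \(v_p(H(a))\ge v_p(M)>v_p(\rho)\) forces \(v_p(K(a))\le v_p(\rho)\), and \(d\mid q(0)=K(a)\). The third case is the only nontrivial one, and it is also where the paper needs an extra argument. There you only say that \(d\mid\Delta^kq(0)\), ``which is divisible by \(M^k\) times integers,'' and then state the bound as an expectation. Divisibility of \(\Delta^kq(0)\) by \(M^k\) is a \emph{lower} bound on its valuation and gives no upper bound on \(v_p(d)\). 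What you need is a specific nonzero multiple of \(d\) whose \(p\)-adic valuation you can bound from above.

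\textbf{False intermediate claims.} The earlier material should be deleted rather than left in.
\begin{enumerate}
  \item The target \(C_0=|\rho|\) is false. For \(H=x^2+x-1\), \(K=x^2+x\) one has \(K-H=1\), so \(\rho=1\), while \(M=1\) gives fixed divisor \(\fd(K)=2\).
  \item In the ``Case \(p\mid M\)'' paragraph, both the bound \(p^e\mid(\deg K)!\gcd(K(a),M\operatorname{cont}K)\) and the control of \(v_p(K(a))\) fail. Take \(H=x+4\), \(K=x^2\), \(a=0\), \(M=4\): then \(\rho=16\), \(K(a)=0\), and \(K(4z)=16z^2\) has fixed divisor \(16\nmid 8\).
  \item ``Handle it as in the next case'' does not apply, since that case assumes \(p\nmid M\).
\end{enumerate}

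\textbf{The repair.} With \(n=\deg K\ge1\) and \(q(z)=K(a+Mz)\),
\[
  \Delta^nq(0)=\sum_{j=0}^{n}(-1)^{n-j}\binom{n}{j}q(j)=n!\,\operatorname{lc}(K)\,M^n\ne0,
\]
so \(d\mid n!\,\operatorname{lc}(K)\,M^n\). Hence \(v_p(d)\le v_p(n!\,\operatorname{lc}(K))+n\,v_p(\rho)\) whenever \(v_p(M)\le v_p(\rho)\).

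Your content version also holds, via the correct form of the fact you cite. \(\fd(q)\) divides \((\deg q)!\) times \emph{each} monomial coefficient \(M^kK^{(k)}(a)/k!\) of \(q\), not \(k!\) times it. The Taylor coefficients of \(K\) at \(a\) have gcd \(\operatorname{cont}(K)\).

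Since the bound above also covers \(p\nmid M\), you even get \(d\mid n!\,\operatorname{lc}(K)\,\rho^n\) with no separate \(\fd(K)\) case. Your proposed \(C_0\) is therefore valid.

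\textbf{Comparison with the paper.} Your first two cases coincide with the paper's, with its \(R\) in place of your \(\rho\); it phrases the second as \(p^{\min(m,\delta_p)}\mid R\). In the range \(1\le v_p(M)\le v_p(R)\), the paper argues by finiteness instead. Only finitely many pairs \((m,a\bmod p^m)\) with \(H(a)\equiv0\) occur, and for each one \(\delta_p\) equals the finite minimum of \(v_p(K)\) on the class \(a\bmod p^m\).

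That argument computes \(\delta_p\) exactly but leaves \(C_0\) inexplicit. The finite-difference route is cruder prime by prime but gives a closed-form constant depending only on \(\deg K\), \(\operatorname{lc}(K)\) and \(\rho\). The later sieve arguments use only the existence of \(C_0\), so either suffices.
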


\begin{proof}
There are \(B,C\in\Z[x]\) and a nonzero integer \(R\) such that
\begin{equation}\label{eq:bezout}
  B(x)H(x)+C(x)K(x)=R.
\end{equation}
Fix \(a,M\) with \(M\mid H(a)\), and let
\[
  \Delta=\fd\bigl(K(a+M\,\cdot)\bigr).
\]
For a prime \(p\), write
\[
  \delta_p=v_p(\Delta)
  =\min_{y\in\Z}v_p(K(a+My)).
\]

If \(p\nmid M\), the progression \(a+My\) covers every residue class
modulo every power of \(p\).  Hence
\[
  \delta_p\le v_p(\fd(K)).
\]

Suppose \(p\mid M\), and set \(m=v_p(M)\).  Congruence preservation gives
\(p^m\mid H(a+My)\) for every \(y\).  From \eqref{eq:bezout},
\[
  p^{\min(m,\delta_p)}\mid R.
\]
If \(m>v_p(R)\), this gives \(\delta_p\le v_p(R)\).  If
\(1\le m\le v_p(R)\), then \(p\mid R\) and there are only finitely many
possible pairs consisting of \(m\) and a residue class
\(a\bmod p^m\) with \(H(a)\equiv0\pmod {p^m}\).  For each such pair,
\[
  e_{p,m,a}:=\min_{x\equiv a\;(\mathrm{mod}\;p^m)}v_p(K(x))
\]
is finite, since \(K\) is not the zero polynomial.  Moreover this minimum
equals \(\delta_p\).  Indeed, write \(M=p^mM'\) with \(p\nmid M'\), choose
an \(x\) attaining the displayed minimum, and use that \(M'\) is a
\(p\)-adic unit to choose \(y\) with
\[
  a+My\equiv x\pmod {p^{e_{p,m,a}+1}}.
\]
Then \(v_p(K(a+My))=e_{p,m,a}\).  Taking the maximum over these finitely
many cases bounds \(\delta_p\).  Only primes dividing
\(R\,\fd(K)\) can contribute, so their bounds combine to a constant
\(C_0(H,K)\).
\end{proof}

\section{Simultaneous dominance and the coprime classification}
\label{sec:positive}

\begin{proof}[Proof of \cref{thm:simultaneous-dominance}]
Put
\[
  K(x)=\prod_{j=1}^sG_j(x).
\]
First suppose that \(K\) is nonconstant.  Let
\[
  A_t=\{a_1,\ldots,a_t\}\subset\N,\qquad
  M_t=\prod_{i=1}^t a_i.
\]
Here \(A_t=A\) in the finite case, while in the infinite case the elements
of \(A\) are listed increasingly and \(A_t\) consists of the first \(t\).

Choose \(u_t\in\Z\) such that \(M_t\mid H(u_t)\).  Then for every
\(y\in\Z\),
\[
  M_t\mid H(u_t+M_ty)\mid F(u_t+M_ty).
\]
In particular, every member of \(A_t\) divides the corresponding value of
\(F\).

Let
\[
  \Delta_t=\fd\bigl(K(u_t+M_t\,\cdot)\bigr).
\]
By \cref{lem:fixed-divisor}, \(1\le\Delta_t\le C_0(H,K)\).  The polynomial
\[
  Q_t(y)=\frac{K(u_t+M_ty)}{\Delta_t}
\]
is nonconstant, integer-valued, has degree \(\deg K\), and has fixed divisor
one.  By \cref{lem:uniform-saturation}, there are arbitrarily large positive
\(y\) such that
\[
  Q_t(y)\ne0,\qquad \Omega(Q_t(y))\le R_{\deg K}.
\]
For \(n=u_t+M_ty\), all \(G_j(n)\) are nonzero.  Moreover,
\[
  \Omega(K(n))
  \le R_{\deg K}+
  \max_{1\le m\le C_0(H,K)}\Omega(m)=:B,
\]
and hence, for every \(j\),
\begin{equation}\label{eq:opponent-bound}
  d_A(G_j(n))
  \le\tau(|G_j(n)|)
  \le\tau(|K(n)|)
  \le2^B=:C.
\end{equation}
The constant is independent of \(t\) and \(A\).

If \(A\) is finite and \(|A|>C\), use \(A_t=A\) to obtain
\[
  d_A(F(n))\ge|A|>C\ge\max_jd_A(G_j(n))
\]
for infinitely many \(n\).  If \(A\) is infinite, choose the saturating
value of \(y\) large enough at each stage that \(n\to\infty\).  Then
\[
  d_A(F(n))-\max_jd_A(G_j(n))\ge t-C,
\]
which proves the second assertion.

If \(K\) is a nonzero constant, the same proof stops before the sieve step:
all opponent divisor counts are bounded by the divisor counts of their fixed
nonzero values.
\end{proof}

\begin{proof}[Proof of \cref{thm:coprime-classification}]
If neither \(F\) nor \(G\) is intersective, the property fails by
\cref{prop:both-nonintersective}.  Conversely, suppose for example that
\(F\) is intersective.  It is nonconstant, and
\cref{thm:simultaneous-dominance} applies with \(H=F\), \(s=1\), and
opponent \(G\).  It gives an unbounded positive signed difference, hence
\(\cP(F,G)\).  The case in which \(G\) is intersective is symmetric.
\end{proof}

\begin{corollary}[Two-sided oscillation]\label[corollary]{cor:oscillation}
Let \(F,G\in\Z[x]\setminus\{0\}\) be coprime in \(\Q[x]\), and suppose both
are intersective.  For every infinite \(A\subset\N\),
\[
  \limsup_{n\to\infty}\bigl(d_A(F(n))-d_A(G(n))\bigr)=\infty
\]
and
\[
  \liminf_{n\to\infty}\bigl(d_A(F(n))-d_A(G(n))\bigr)=-\infty,
\]
where the two sequences may be restricted to integers at which both
polynomial values are nonzero.
\end{corollary}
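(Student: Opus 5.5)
The plan is to apply \cref{thm:simultaneous-dominance} twice, once with each polynomial as the centre and the other as the single opponent ($s=1$).

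First, since \(F\) is intersective, it is nonconstant. A nonzero constant \(c\) has no root modulo \(|c|+1\), so it cannot be intersective. We therefore take \(H=F\), which is a nonconstant intersective divisor of \(F\) itself. Coprimality of \(F\) and \(G\) in \(\Q[x]\) gives \(\gcd(H,G)=1\). Part (ii) of \cref{thm:simultaneous-dominance} then yields
\[
  \limsup_{n\to\infty}\bigl(d_A(F(n))-d_A(G(n))\bigr)=\infty
\]
for every infinite \(A\). Moreover, the integers \(n\) realizing this may be chosen with \(G(n)\ne0\).

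Second, we exchange the roles: take \(H=G\) as the intersective centre and \(F\) as the sole opponent. The same reasoning gives
\[
  \limsup_{n\to\infty}\bigl(d_A(G(n))-d_A(F(n))\bigr)=\infty,
\]
which is equivalent to the stated \(\liminf=-\infty\). Here the integers \(n\) can be taken with \(F(n)\ne0\).

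The only point needing care is the restriction to integers at which both values are nonzero. The theorem guarantees nonvanishing of the opponent only. However, \(F\) and \(G\) are nonzero polynomials, so each has finitely many integer zeros. The sequences supplied by the theorem tend to infinity (in its proof, \(n\to\infty\) is arranged explicitly). Discarding finitely many terms therefore leaves subsequences on which \(F(n)G(n)\ne0\), and the divisor counts are finite there. I expect no real obstacle: the step most worth checking is that the witnessing \(n\) are unbounded, and this is exactly what part (ii) of the theorem provides.
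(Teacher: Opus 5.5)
Your proposal is correct and is essentially the paper's proof: apply \cref{thm:simultaneous-dominance} with $s=1$ first to $F$ against $G$, and then to $G$ against $F$. Your additional remark handles the restriction to $F(n)G(n)\ne0$, which the paper leaves implicit: the witnessing $n$ are unbounded and each polynomial has only finitely many integer zeros, so those zeros can be discarded.
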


\begin{proof}
Apply \cref{thm:simultaneous-dominance} first to \(F\) against \(G\), and
then to \(G\) against \(F\).
\end{proof}

\begin{corollary}[Zero and constant cases]
\label[corollary]{cor:zero-constant}
Let \(F,G\in\Z[x]\).
\begin{enumerate}
  \item If exactly one of \(F,G\) is the zero polynomial, then
  \(\cP(F,G)\) holds.
  \item If both are zero, then \(\cP(F,G)\) fails.
  \item If \(C\ne0\) is constant and \(P\) is nonzero and nonconstant, then
  \(\cP(C,P)\) holds if and only if \(P\) is intersective.
\end{enumerate}
\end{corollary}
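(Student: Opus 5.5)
The three assertions can be handled separately, each by unwinding the extended difference \eqref{eq:extended-difference} and then invoking an earlier result.

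\emph{(i)} Suppose \(F=0\) and \(G\ne0\); the other case is symmetric. Since \(G\) has finitely many integer zeros, \(G(n)\ne0\) for all large \(n\). For such \(n\) exactly one of the two values is zero, so \(\delta_A(F(n),G(n))=\infty\). Hence the limsup is \(\infty\) for every \(A\), and this does not even require \(A\) to be infinite.

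\emph{(ii)} If \(F=G=0\), then \(\delta_A(0,0)=0\) for every \(n\), so \(\cP(F,G)\) fails.

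\emph{(iii)} Let \(C\ne0\) be constant and \(P\) nonzero and nonconstant.
\begin{itemize}
  \item \emph{\(P\) intersective.} Since \(\gcd(P,C)=1\) in \(\Q[x]\), \cref{thm:simultaneous-dominance} applies with \(F=H=P\), \(s=1\) and \(G_1=C\). It gives
  \[
    \limsup_{n\to\infty}\bigl(d_A(P(n))-d_A(C)\bigr)=\infty,
  \]
  and the chosen \(n\) can be taken with \(P(n)\ne0\) after discarding finitely many. Hence \(\cP(P,C)\) holds, and since \(\delta_A\) is symmetric, so does \(\cP(C,P)\). One could instead quote \cref{thm:coprime-classification} directly, because a nonzero constant is coprime to \(P\) in \(\Q[x]\).
  \item \emph{\(P\) not intersective.} Again \(C\) and \(P\) are coprime in \(\Q[x]\). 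A nonzero constant \(C\) is never intersective, since it has no root modulo \(2|C|\). So neither polynomial is intersective, and \cref{prop:both-nonintersective} shows that \(\cP(C,P)\) fails.
\end{itemize}

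The only point needing care is that \cref{thm:coprime-classification} and \cref{thm:simultaneous-dominance} allow constant polynomials. Both require only \(F,G\in\Z[x]\setminus\{0\}\), and the proof of \cref{thm:simultaneous-dominance} explicitly covers the case where the opponent product \(K\) is a nonzero constant. So nothing beyond the cited results is needed.
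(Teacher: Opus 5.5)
Your proof is correct and follows the paper's route. Parts (i) and (ii) are read off from \eqref{eq:extended-difference} using the finiteness of the integer zero set, and (iii) is the coprime classification, which you unpack into its two halves (\cref{thm:simultaneous-dominance} for the intersective direction, \cref{prop:both-nonintersective} for the other), including the detail, left implicit in the paper, that a nonzero constant \(C\) has no root modulo \(2|C|\).
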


\begin{proof}
The first two assertions follow immediately from
\eqref{eq:extended-difference}, since a nonzero polynomial has only finitely
many integer zeros.  The third assertion is the coprime classification.
\end{proof}

\section{Recognizing intersectivity and common-support obstructions}
\label{sec:common}

The definition of intersectivity is elementary, but its effective
recognition has two distinct layers.  Replace a polynomial by its primitive
squarefree radical and factor it over \(\Q\) as
\[
  R=h_1\cdots h_r.
\]
If \(L\) is the splitting field, \(\Gamma=\Gal(L/\Q)\), and \(H_i\) is the
stabilizer of a root of \(h_i\), then the good-prime condition is the finite
permutation-group covering
\[
  \Gamma=\bigcup_{i=1}^r\ \bigcup_{\sigma\in\Gamma}
  \sigma H_i\sigma^{-1}.
\]
Berend and Bilu \cite{BerendBilu} proved that this condition, together with
solvability at one explicitly computable modulus supported on the
resultants \(\Res(h_i,h_i')\), is equivalent to intersectivity.  Thus the
problem is decidable in principle, but the calculation can require
factorization over \(\Q\), a splitting field and its permutation group, and
separate \(p\)-adic analysis at the finitely many bad primes.

\begin{definition}
A nonzero polynomial \(R\in\Z[x]\) is \emph{prime-covering} if it has a
root modulo all but finitely many primes.
\end{definition}

Every intersective polynomial is prime-covering.  For a squarefree
polynomial, the converse holds at every good prime: a root modulo \(p\) is
simple and hence lifts to a root in \(\Z_p\).  The gap between the two notions
is therefore concentrated at finitely many primes, but it is genuinely
\(p\)-adic: roots must persist modulo \(p^k\) for every \(k\), not merely
modulo \(p\).  Equivalently, prime-covering is the Galois layer of the
Berend--Bilu criterion, whereas intersectivity also includes its finite local
layer.  The distinction is illustrated concretely in
\cref{ex:236}.  Recent work gives more explicit criteria and constructions
for special factorization patterns
\cite{ElsholtzKlahnTechnau,MishraQuadratic,MishraMinimal,Mishra2024}, but
there is no comparably short coefficient criterion in unrestricted degree.

The preceding framework becomes completely elementary through degree three.

\begin{proposition}[Complete intersectivity test through degree three]
\label[proposition]{prop:cubic-intersectivity}
Let \(P\in\Z[x]\setminus\{0\}\) have degree at most three, and let \(R\)
be its squarefree radical over \(\Q\), represented primitively in \(\Z[x]\).
Then \(P\) is intersective if and only if one of the following alternatives
holds.
\begin{enumerate}
  \item The polynomial \(R\) splits over \(\Q\).  On writing
  \[
    R(x)=\pm\prod_{i=1}^r(a_i x+b_i),
    \qquad 1\le r\le3,
  \]
  with each factor primitive, one has
  \[
    \gcd(|a_1|,\ldots,|a_r|)=1.
  \]
  \item The polynomial \(R\) has degree three and
  \[
    R(x)=\pm L(x)Q(x),\qquad L(x)=ax+b,
  \]
  where \(L\) is primitive and \(Q\in\Z[x]\) is primitive and irreducible
  quadratic.  For every prime \(p\mid a\), the polynomial \(Q\) has a root
  in \(\Z_p\).
\end{enumerate}
The test in \textup{(ii)} is finite.  If
\[
  s_p=v_p\bigl(\lvert\Res(Q,Q')\rvert\bigr),
\]
then its local condition is equivalent to the solvability of
\[
  Q(x)\equiv0\pmod {p^{2s_p+1}}
\]
for every prime \(p\mid a\).
\end{proposition}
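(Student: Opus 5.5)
By \cref{lem:radical-intersective} it suffices to treat \(R\), which is squarefree, primitive, of degree at most three, and factors over \(\Q\) according to one of the following patterns: a product of linears; an irreducible quadratic; an irreducible cubic; or a linear times an irreducible quadratic. The plan is to discard the patterns that are never intersective, handle the split case by the argument of \cref{prop:quadratic}, and analyse the mixed pattern prime by prime.

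\emph{Irreducible patterns.} An irreducible quadratic or cubic \(h\) is never intersective; in fact it is not even prime-covering. In the Galois group of its splitting field, acting transitively on \(n=\deg h\in\{2,3\}\) roots, Jordan's theorem supplies a derangement. Chebotarev then gives infinitely many primes with Frobenius in that class, and at such primes not dividing the discriminant or leading coefficient, \(h\) has no root mod \(p\).

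\emph{Split case (i).} Write \(R=\pm\prod(a_ix+b_i)\) with primitive factors.
\begin{itemize}
\item If a prime \(p\) divides every \(a_i\), then \(p\nmid b_i\) for each \(i\), so every factor is a \(p\)-adic unit at integer arguments. Hence \(R(n)\not\equiv0\pmod p\) for all \(n\).
\item If \(\gcd_i(|a_i|)=1\), then for each \(p\) some \(a_i\) is a unit, and \(a_ix+b_i\) has a root in \(\Z_p\).
\end{itemize}
The Chinese remainder theorem assembles the local roots, exactly as in \cref{prop:quadratic}.

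\emph{Mixed case (ii).} Here \(R=\pm LQ\) with \(L=ax+b\). For \(p\nmid a\), \(L\) already has a root in \(\Z_p\). For \(p\mid a\), \(L\) is a unit at every \(p\)-adic integer, so \(R\) has a \(p\)-adic zero iff \(Q\) does. Since \(\Z_p\) is a domain, a zero of \(R\) forces a zero of \(L\) or of \(Q\), so this is exactly the stated condition. Note that (ii) tacitly requires \(Q\) to have a root in \(\Z_p\) for all \(p\) not dividing \(a\), but only via \(L\), which is automatic. Also, intersectivity with \(a=\pm1\) never needs \(Q\) at all, which matches (ii) vacuously. Combining over primes by CRT (and compactness for the converse) gives the equivalence.

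\emph{Finiteness of the test.} This is the main technical step: a Hensel-type lemma for \(Q\). Let \(Q\) be primitive and irreducible with \(\Res(Q,Q')\ne0\), and put \(s=s_p\). Suppose \(Q(x_0)\equiv0\pmod{p^{2s+1}}\). The plan is as follows.
\begin{enumerate}
\item Use the identity \(AQ+BQ'=\Res(Q,Q')\) with \(A,B\in\Z[x]\). It gives \(\min(v_p(Q(x_0)),v_p(Q'(x_0)))\le s\).
\item Since \(v_p(Q(x_0))\ge 2s+1>s\), this forces \(v_p(Q'(x_0))\le s\).
\item Then \(v_p(Q(x_0))>2v_p(Q'(x_0))\), and the strong form of Hensel's lemma produces a root in \(\Z_p\).
\end{enumerate}
The converse is trivial, since a root in \(\Z_p\) reduces to a solution modulo \(p^{2s+1}\). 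For a quadratic one may equally use \(\Res(Q,Q')=\pm a_Q\operatorname{disc}(Q)\) and complete the square. The point to check carefully is that the Bezout identity holds with integral \(A,B\), which is standard for resultants.
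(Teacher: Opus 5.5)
Your proposal is correct and follows the paper's proof essentially step for step. The shared steps are: reduction to the radical via \cref{lem:radical-intersective}; the unit argument for primitive linear factors at primes dividing their leading coefficients; a derangement plus Chebotarev to exclude irreducible quadratics and cubics; and the integral Bezout identity $AQ+BQ'=\Res(Q,Q')$ combined with the strong form of Hensel's lemma for the finite congruence test.

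The only blemish is your remark that (ii) ``tacitly requires'' $Q$ to have a root in $\Z_p$ for $p\nmid a$. What you mean is that intersectivity requires a $p$-adic root of $R$ at those primes, which $L$ already supplies, so condition (ii) rightly says nothing about $Q$ there.
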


\begin{proof}
By \cref{lem:radical-intersective}, it suffices to consider \(R\).  A
primitive linear polynomial \(ax+b\) has a root in \(\Z_p\) exactly when
\(p\nmid a\).  Since \(\Z_p\) is an integral domain, a product of primitive
linear factors has a root in \(\Z_p\) exactly when at least one of their
leading coefficients is a \(p\)-adic unit.  Requiring this for every prime
is precisely the gcd condition in \textup{(i)}.

Suppose next that \(R=LQ\) as in \textup{(ii)}.  If \(p\nmid a\), then
\(L\) has a root in \(\Z_p\).  If \(p\mid a\), primitivity gives
\(p\nmid b\), so \(L(x)\) is a \(p\)-adic unit for every
\(x\in\Z_p\); at such a prime, \(R\) has a \(p\)-adic root exactly when
\(Q\) does.  This proves the stated local criterion.

To verify the finite congruence test, put
\(S=\Res(Q,Q')\ne0\).  There are \(A,B\in\Z[x]\) such that
\[
  A(x)Q(x)+B(x)Q'(x)=S.
\]
If \(Q(x)\equiv0\pmod {p^{2s_p+1}}\), the identity implies
\(v_p(Q'(x))\le s_p\).  Hence
\[
  v_p(Q(x))>2v_p(Q'(x)),
\]
and the strong form of Hensel's lemma gives a zero of \(Q\) in \(\Z_p\).
The converse follows by reducing a \(p\)-adic zero modulo
\(p^{2s_p+1}\).

It remains to exclude the other factorizations.  A squarefree polynomial of
degree at most three with no linear factor is irreducible quadratic or
irreducible cubic.  Its nontrivial transitive Galois action contains a
derangement, so Chebotarev gives infinitely many primes modulo which it has
no root.  It is not prime-covering and therefore not intersective.  The two
listed alternatives exhaust all remaining factorizations.
\end{proof}

\begin{example}[A cubic intersective polynomial without an integral root]
\label[example]{ex:cubic-intersective}
The polynomial
\[
  P(x)=(2x+1)(x^2+x+2)
\]
has no integral root.  Its quadratic factor is irreducible over \(\Q\), and
the only prime dividing the leading coefficient of \(2x+1\) is \(2\).
Modulo \(2\), the quadratic has the simple root \(0\), so Hensel's lemma
gives a root in \(\Z_2\).  \Cref{prop:cubic-intersectivity}\textup{(ii)}
therefore shows that \(P\) is intersective.
\end{example}

The proposition is a complete test for the intersectivity of a single
polynomial through degree three.  It is not a degree-three classification of
\(\cP(F,G)\); common-support pairs in that range already meet the higher
local-routing difficulty described in \cref{q:exceptional-boundary}.

We make the factor-support normalization precise.  For a nonzero
\(P\in\Z[x]\), write
\[
  P=c_P\prod_{\phi\in\cI_P}\phi^{e_P(\phi)},
\]
where \(c_P\in\Z\setminus\{0\}\) and the \(\phi\) are pairwise
nonassociate primitive irreducible polynomials with positive leading
coefficients.  For \(F,G\ne0\), set
\[
  \cI_C=\cI_F\cap\cI_G,\qquad
  \cI_U=\cI_F\setminus\cI_G,\qquad
  \cI_V=\cI_G\setminus\cI_F,
\]
and define
\[
  C=\prod_{\phi\in\cI_C}\phi,\qquad
  U=\prod_{\phi\in\cI_U}\phi,\qquad
  V=\prod_{\phi\in\cI_V}\phi.
\]
An empty product is \(1\).  Then \(C,U,V\) are pairwise coprime and
\[
  \radQ(F)=CU,\qquad \radQ(G)=CV
\]
up to multiplication by \(-1\).

For a nonconstant squarefree polynomial, prime-covering is a Galois
condition.  In the monic integral setting, a prime-covering polynomial with
no integer root is called \emph{exceptional}; see
\cite{ElsholtzKlahnTechnau}.

\begin{proof}[Proof of \cref{thm:common-support}]
Suppose that \(UV\) has no root modulo any prime in an infinite set
\(S\).  Delete from \(S\) the finitely many primes dividing \(c_Fc_G\).
For \(p\in S\), neither \(U\) nor \(V\) has a root modulo \(p\).  Therefore,
for every integer \(n\),
\[
  p\mid F(n)
  \quad\Longleftrightarrow\quad
  p\mid C(n)
  \quad\Longleftrightarrow\quad
  p\mid G(n).
\]
Taking \(A=S\), viewed as an infinite set of positive integers, gives
\[
  Z_F(p)=Z_G(p)\qquad(p\in A).
\]
\Cref{prop:identical-roots} proves the result.
\end{proof}

We recall the precise Chebotarev--permutation correspondence used next.  For a
finite Galois extension \(K/\Q\), the Chebotarev density theorem assigns to
each conjugacy class \(\mathcal C\subset\Gal(K/\Q)\) a set of unramified
primes of natural density \(|\mathcal C|/|\Gal(K/\Q)|\); see
\cite[Section~2, Chebotarev theorem]{ElsholtzKlahnTechnau}.  If \(K\) is the
splitting field of a squarefree polynomial \(R\), then, outside the primes
dividing its leading coefficient or discriminant, the cycle type of Frobenius
is the factorization type of \(R\bmod p\).  Hence \(R\bmod p\) has a linear
factor exactly when Frobenius fixes a root.  An element of this permutation
action with no fixed root is a \emph{derangement}; thus the primes at which
\(R\) has no root are exactly the primes belonging to derangement conjugacy
classes, apart from the finite exceptional set.  The resulting fixed-point
density formula is stated explicitly in
\cite[Theorem~2, p.~1666]{BerendBilu} and
\cite[Theorem~2.1]{ElsholtzKlahnTechnau}.

\begin{proposition}[Frobenius fixed-point density]
\label[proposition]{prop:frobenius}
Let \(R\in\Z[x]\) be nonconstant and squarefree, let \(L\) be its splitting
field, and let \(\Gamma=\Gal(L/\Q)\) act on the roots of \(R\).  The set of
primes modulo which \(R\) has a root has natural density
\[
  \frac{\#\{\sigma\in\Gamma:\sigma\text{ fixes a root of }R\}}
       {|\Gamma|}.
\]
\end{proposition}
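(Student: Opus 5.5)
This is a direct application of the Chebotarev density theorem, used through the Frobenius cycle-type dictionary recalled just before the statement.

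\emph{Finite exceptional set.} Write \(R=c\prod_i h_i\) with distinct irreducible \(h_i\in\Z[x]\), and let \(a\) be the leading coefficient of \(R\) and \(\operatorname{disc}(R)\ne0\) its discriminant, nonzero because \(R\) is squarefree. Let \(S_0\) be the finite set of primes dividing \(a\operatorname{disc}(R)\), together with those ramified in \(L\); the latter all divide the discriminant anyway. Deleting \(S_0\) does not change natural densities.

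\emph{Root criterion at good primes.} Fix \(p\notin S_0\). Then \(R\bmod p\) is separable of full degree. Fix a prime \(\mathfrak P\) of \(L\) above \(p\). Reduction modulo \(\mathfrak P\) maps the roots of \(R\) in the ring of integers of \(L\), after clearing the leading coefficient, bijectively onto the roots of \(\bar R\) in the residue field \(\mathbb F_{\mathfrak P}\); this uses that \(p\nmid a\) and that \(\bar R\) is separable. The map is equivariant for the decomposition group, and the Frobenius \(\sigma_{\mathfrak P}\) maps to the \(p\)-power map. The roots of \(\bar R\) lying in \(\mathbb F_p\) are exactly the fixed points of \(x\mapsto x^p\). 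Hence \(R\) has a root modulo \(p\) if and only if \(\sigma_{\mathfrak P}\) fixes a root of \(R\). This property is invariant under conjugation, so it depends only on the Frobenius class \(\mathrm{Frob}_p\).

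\emph{Counting.} Let \(\mathcal F\subset\Gamma\) be the set of elements fixing at least one root. It is a union of conjugacy classes, since \(\tau\sigma\tau^{-1}\) fixes \(\tau(\alpha)\) whenever \(\sigma\) fixes \(\alpha\). The set of primes at which \(R\) has a root agrees, up to \(S_0\), with \(\{p\notin S_0:\mathrm{Frob}_p\subset\mathcal F\}\). Applying Chebotarev in its natural-density form to each class \(\mathcal C\subset\mathcal F\) and summing the finitely many densities \(|\mathcal C|/|\Gamma|\) gives density \(|\mathcal F|/|\Gamma|\), as claimed.

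The only delicate point is the reduction step. One must check that non-monic \(R\) causes no trouble, by scaling \(x\mapsto x/a\) to get a monic integral polynomial with the same splitting field, and that distinct roots stay distinct modulo \(\mathfrak P\), which holds because \(p\nmid\operatorname{disc}(R)\). This is standard, and it is exactly the dictionary cited from \cite{BerendBilu,ElsholtzKlahnTechnau}, so the proposition follows from that dictionary together with Chebotarev.
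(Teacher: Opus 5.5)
Your proposal is correct and follows the paper's own argument. The paper also works away from the finitely many primes dividing the leading coefficient and discriminant, uses that \(R\) has a root modulo \(p\) exactly when Frobenius fixes a root, notes that the root-fixing elements form a union of conjugacy classes, and sums their Chebotarev densities. The only difference is that you verify the reduction-modulo-\(\mathfrak P\) dictionary yourself (integrality of the roots when \(p\nmid a\), injectivity from \(p\nmid\operatorname{disc}(R)\), Frobenius acting as the \(p\)-power map), whereas the paper cites it.
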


\begin{proof}
The preceding correspondence identifies the required primes with the union
of the conjugacy classes whose elements fix a root.  Summing their
Chebotarev densities gives the displayed quotient; omitting finitely many bad
primes does not change the density.
\end{proof}

\begin{proof}[Proof of \cref{cor:derangement}]
If \(\sigma\in\Gamma\) fixes no root, then the same is true throughout its
conjugacy class.  By \cref{prop:frobenius}, a positive-density set of primes
has no root of \(UV\).  Apply \cref{thm:common-support}.
\end{proof}

\begin{corollary}[A factor-support sandwich]
\label[corollary]{cor:sandwich}
Let \(C,U,V\) be the support factors of \(F,G\ne0\).
\begin{enumerate}
  \item If \(U\) is nonconstant and intersective, or if \(V\) is
  nonconstant and intersective, then \(\cP(F,G)\) holds.
  \item If \(\cP(F,G)\) holds, then \(UV\) is prime-covering; equivalently,
  when \(UV\) is nonconstant, its Galois action has no derangement.
\end{enumerate}
\end{corollary}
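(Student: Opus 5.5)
Both parts follow quickly from results already proved, so the plan is to reduce each to an earlier theorem.

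For part (i), suppose without loss of generality that \(U\) is nonconstant and intersective; the case of \(V\) is symmetric, with the roles of \(F\) and \(G\) exchanged. We apply \cref{thm:simultaneous-dominance} with \(s=1\), \(H=U\), and the single opponent \(G_1=G\). Three hypotheses need checking.
\begin{enumerate}
  \item \(U\) lies in \(\Z[x]\) and is nonconstant and intersective, by assumption.
  \item \(U\) divides \(F\) in \(\Z[x]\). Each irreducible factor \(\phi\in\cI_U\) is primitive and occurs in the factorization \(F=c_F\prod\phi^{e_F(\phi)}\) with \(e_F(\phi)\ge1\), so the squarefree product \(U\) divides \(F\).
  \item \(\gcd(U,G)=1\) in \(\Q[x]\). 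Every irreducible factor of \(G\) over \(\Q\) is associate to some element of \(\cI_G\), and \(\cI_U\) is disjoint from \(\cI_G\) by definition.
\end{enumerate}
Assertion (ii) of \cref{thm:simultaneous-dominance} then gives, for every infinite \(A\subset\N\), integers \(n\to\infty\) with \(G(n)\ne0\) and \(d_A(F(n))-d_A(G(n))\) unbounded. There is one small point about the extended difference. If \(F(n)=0\) at some of these \(n\), then \(\delta_A(F(n),G(n))=\infty\), which only helps. Since \(F\) has finitely many integer zeros, we may in any case discard them. For the remaining \(n\), \(\delta_A\) equals \(|d_A(F(n))-d_A(G(n))|\), which is at least the signed difference. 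Hence \(\limsup\delta_A=\infty\), and \(\cP(F,G)\) holds.

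For part (ii), the first assertion is the contrapositive form of \cref{thm:common-support}. If \(UV\) failed to be prime-covering, it would lack a root modulo infinitely many primes, and \(\cP(F,G)\) would fail. For the equivalence with the Galois formulation, assume \(UV\) is nonconstant. It is squarefree, being a product of pairwise nonassociate irreducibles, so \cref{prop:frobenius} applies.
\begin{enumerate}
  \item If some \(\sigma\in\Gamma\) is a derangement, then its whole conjugacy class consists of derangements. Chebotarev then yields a positive-density, hence infinite, set of unramified primes at which \(UV\) has no root, so \(UV\) is not prime-covering. This is exactly the argument of \cref{cor:derangement}.
  \item Conversely, if \(\Gamma\) has no derangement, then every Frobenius at a good prime fixes a root. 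By the cycle-type correspondence, \(UV\) has a root modulo every prime not dividing its leading coefficient or discriminant, so it is prime-covering.
\end{enumerate}

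There is no real obstacle in either part. The only points needing care are the bookkeeping that \(U\mid F\) and \(\gcd(U,G)=1\) follow from the support normalization, and the zero-value convention in \eqref{eq:extended-difference}. The latter is harmless because \cref{thm:simultaneous-dominance} already produces \(n\) with \(G(n)\ne0\).
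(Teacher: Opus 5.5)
Your proposal is correct and matches the paper's proof. Part (i) is \cref{thm:simultaneous-dominance} applied with $H=U$ (or $V$) against the single opponent $G$ (or $F$), and part (ii) is \cref{thm:common-support} together with the Chebotarev--Frobenius correspondence of \cref{prop:frobenius,cor:derangement} for both directions of the Galois equivalence; your checks that $U\mid F$ and $\gcd(U,G)=1$, and your handling of the zero-value convention in \eqref{eq:extended-difference}, are bookkeeping the paper leaves implicit.
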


\begin{proof}
The polynomial \(U\) divides \(F\) in \(\Z[x]\) and is coprime to \(G\);
the analogous statement holds for \(V\).  Thus the first assertion follows
from \cref{thm:simultaneous-dominance}.  The second is
\cref{thm:common-support,cor:derangement}.  For the stated equivalence,
outside the finite exceptional set in the Frobenius description, every
Frobenius element fixes a root when the Galois action has no derangement;
hence every such prime has a root of \(UV\).
\end{proof}

When \(F\) and \(G\) are coprime, the common factor \(C\) is \(1\).
In that case \cref{prop:both-nonintersective} closes the gap between the two
sides of \cref{cor:sandwich}, giving
\cref{thm:coprime-classification}.  For noncoprime pairs, the remaining case
is genuinely different: the common support can make both original
polynomials intersective even when neither unique factor \(U\) nor \(V\) is
intersective.

\section{The exceptional p-adic boundary}
\label{sec:boundary}

Prime-covering is weaker than intersectivity.  The distinction is already
visible in a classical elementary example.

\begin{example}[Prime-covering but not intersective]\label[example]{ex:236}
Let
\[
  E(x)=(x^2-2)(x^2-3)(x^2-6).
\]
For every odd prime \(p\), if neither \(2\) nor \(3\) is a quadratic residue
modulo \(p\), then \(6\) is a quadratic residue.  At \(p=2\) or \(p=3\),
the residue \(x=0\) is a root, so \(E\) has a root modulo every prime.

On the other hand, if \(x\) is even then
\[
  E(x)\equiv4\pmod8,
\]
while if \(x\) is odd then
\[
  E(x)\equiv6\pmod8.
\]
Thus \(E\) has no root modulo \(8\) and is not intersective.  This example
is discussed in \cite{ElsholtzKlahnTechnau}.
\end{example}

A genuine intersective polynomial without a rational root is
\[
  (x^2-13)(x^2-17)(x^2-221);
\]
see \cite[Example 1]{BerendBilu}.  These two examples show why the
derangement condition alone cannot settle the common-factor case: it sees
almost all primes, whereas \(\cP(F,G)\) may also be controlled by a finite
set of bad primes and their full \(p\)-adic root profiles.

We now prove the common-factor classification stated in the introduction.

\begin{proof}[Proof of \cref{thm:linear-defects}]
If a prime \(p\) divides both \(u\) and \(v\), then \(U(n)\) and \(V(n)\)
are \(p\)-adic units for every \(n\), by primitivity.  Use the lacunary test
set
\[
  \mathcal A=\{p^{3^j}:j\ge1\},
\]
and write
\[
  N(X)=\#\{j\ge1:3^j\le X\}\qquad(X\ge0).
\]
For all sufficiently large \(n\), the polynomial values are nonzero.  With
\(\tau=v_p(L(n))\), the two divisor counts are then
\[
  N\bigl(v_p(\gamma)+r_L\tau\bigr)
  \quad\text{and}\quad
  N\bigl(v_p(\delta)+s_L\tau\bigr).
\]
Their difference is uniformly bounded in \(\tau\), since
\(N(X)=\log_3(X+1)+O(1)\) and the ratio of the two affine arguments after
adding one is bounded above and below.  Thus \(\cP(F,G)\) fails.

By \cref{lem:radical-intersective,prop:quadratic},
\[
  F\text{ is intersective}\quad\Longleftrightarrow\quad
  \gcd(|\ell|,|u|)=1,
\]
and
\[
  G\text{ is intersective}\quad\Longleftrightarrow\quad
  \gcd(|\ell|,|v|)=1.
\]
If both gcds exceed one, the property fails by
\cref{prop:both-nonintersective}.  This proves the necessity of the stated
conditions.

For sufficiency, suppose by symmetry that
\[
  \gcd(|\ell|,|u|)=\gcd(|u|,|v|)=1.
\]
Put
\[
  R=\Res(L,U)=\ell\mu-u\lambda\ne0,
  \qquad h_q=v_q(R),\qquad
  \beta_q=v_q(\delta)\quad(q\mid v).
\]
For an arbitrary infinite \(\mathcal A\subset\N\), define
\[
  \mathcal A_{\le}=
  \{m\in\mathcal A:
    v_q(m)\le\beta_q+s_Lh_q\text{ for every prime }q\mid v\}.
\]
When \(v=\pm1\), the condition is empty and
\(\mathcal A_{\le}=\mathcal A\).

Suppose first that \(\mathcal A_{\le}\) is infinite.  Choose distinct
\(m_1,\ldots,m_t\) in this set, put
\[
  r_i=\frac{m_i}{\gcd(m_i,|\delta|)},
\]
and write
\[
  d_i=\prod_{q\mid v}q^{v_q(r_i)},\qquad
  c_i=\frac{r_i}{d_i}.
\]
Put
\[
  D=\lcm(d_1,\ldots,d_t),\qquad
  C=\lcm(c_1,\ldots,c_t),\qquad
  B=\prod_{q\mid v}q^{s_Lh_q}.
\]
Thus \(D\mid B\), \(\gcd(D,C)=1\), and \(\gcd(v,C)=1\).
Also \(\gcd(\ell,D)=1\).  Indeed, if a prime \(q\) divided both
\(\ell\) and \(D\), then \(q\mid v\) and \(h_q\ge1\).  But
\(q\nmid u\) and \(q\nmid\lambda\), so
\[
  R=\ell\mu-u\lambda\not\equiv0\pmod q,
\]
a contradiction.

The Chinese remainder theorem now gives \(n_0\in\Z\) such that
\[
  L(n_0)\equiv0\pmod D,\qquad
  V(n_0)\equiv0\pmod C.
\]
Set \(M=DC\).  Since \(r_i\mid L(n_0+My)V(n_0+My)\), every \(m_i\) divides
\(G(n_0+My)\), for every integer \(y\).  Moreover,
\[
  M\mid D\,V(n_0).
\]
The polynomial \(H_D=D V\) is coprime to
\(F=\gamma L^{r_L}U^{r_U}\).  Since \(D\mid B\), only finitely many
\(H_D\) occur.  Taking the maximum in
\cref{lem:fixed-divisor}, there is a constant \(C_*\) such that
\[
  \Delta:=\fd\bigl(F(n_0+M\,\cdot)\bigr)\le C_*.
\]
The polynomial \(F(n_0+My)/\Delta\) is integer-valued, has degree
\(r_L+r_U\), and has fixed divisor one.  By
\cref{lem:uniform-saturation}, there are
arbitrarily large positive \(y\), avoiding the finitely many roots of \(G\),
such that
\[
  \begin{aligned}
    F(n_0+My)G(n_0+My)&\ne0,\\
    \Omega(F(n_0+My))
      &\le R_{r_L+r_U}+\max_{1\le d\le C_*}\Omega(d)=:B_0,
  \end{aligned}
\]
where \(B_0\) is independent of \(t\) and the selected elements.  Hence
\[
  d_{\mathcal A}(G(n_0+My))-d_{\mathcal A}(F(n_0+My))
  \ge t-2^{B_0}.
\]
Choosing these arguments increasingly with \(t\) proves the property in this
case.

Suppose now that \(\mathcal A_{\le}\) is finite.  Choose distinct
\(m_1,\ldots,m_t\in\mathcal A\setminus\mathcal A_{\le}\), write
\[
  d_i=\prod_{p\mid u}p^{v_p(m_i)},\qquad
  c_i=\frac{m_i}{d_i},
\]
and put
\[
  D=\lcm(d_1,\ldots,d_t),\qquad
  C=\lcm\left(c_1,\ldots,c_t,
      \prod_{q\mid v}q^{h_q+1}\right).
\]
The assumptions give
\[
  \gcd(\ell,D)=\gcd(u,C)=\gcd(D,C)=1.
\]
Choose \(n_0\) such that
\[
  L(n_0)\equiv0\pmod D,\qquad
  U(n_0)\equiv0\pmod C,
\]
and set \(M=DC\).  Along the progression \(n=n_0+My\), every \(m_i\)
divides \(F(n)\).

Fix a prime \(q\mid v\).  Since \(q^{h_q+1}\mid U(n)\), the identity
\[
  uL(x)-\ell U(x)=-R
\]
and \(q\nmid u\) give
\[
  v_q(L(n))=h_q.
\]
Also \(V(n)\equiv\nu\not\equiv0\pmod q\), because \(V\) is primitive.
Consequently,
\[
  v_q(G(n))=\beta_q+s_Lh_q\qquad(q\mid v).
\]
Every member of \(\mathcal A\setminus\mathcal A_{\le}\) has
\(q\)-adic valuation greater than \(\beta_q+s_Lh_q\) for at least one
\(q\mid v\).
None can divide \(G(n)\).  Taking positive representatives
\(n_t\to\infty\), avoiding the finitely many roots of \(FG\), yields
\[
  d_{\mathcal A}(F(n_t))-d_{\mathcal A}(G(n_t))
  \ge t-|\mathcal A_{\le}|,
\]
which completes the proof.
\end{proof}

\begin{proof}[Proof of \cref{cor:degree-two}]
If \(C=1\), the polynomials are coprime.  By
\cref{thm:coprime-classification}, the property holds exactly when one of
\(F,G\) is intersective; in that event their product \(UV\) is automatically
intersective.

Suppose that \(C\) is nonconstant.  If \(U=V=1\), take for
\(\mathcal A\) all primes not dividing the two polynomial contents.  For
every \(p\in\mathcal A\), divisibility by \(p\) depends only on the common
squarefree support, so
\[
  Z_F(p)=Z_G(p).
\]
The property fails by \cref{prop:identical-roots}, while \(UV=1\) is not
intersective.

Suppose next that \(U=1\) and \(V\) is nonconstant.  Since
\(\deg C+\deg V\le\deg G\le2\), both \(C\) and \(V\) are linear.  The two
factors occur with multiplicity one in \(G\), while \(F\) has only the
support \(C\).  Hence, on writing \(L=C\), there are nonzero integers
\(\gamma,\delta\) and \(r\in\{1,2\}\) such that
\[
  F=\gamma L^r,\qquad G=\delta LV.
\]
When \(V\) is intersective,
\cref{thm:simultaneous-dominance} applied to the factor \(V\mid G\) proves
the property.  Otherwise choose a prime \(p\) dividing the leading
coefficient of \(V\).  Primitivity makes every \(V(n)\) a \(p\)-adic unit.
Use the lacunary test set
\[
  \mathcal A=\{p^{3^j}:j\ge1\}.
\]
For \(X\ge0\), write
\[
  N(X)=\#\{j\ge1:3^j\le X\}.
\]
At every nonzero value, with \(t=v_p(L(n))\), the two divisor counts are
\[
  N\bigl(v_p(\gamma)+rt\bigr)
  \quad\text{and}\quad
  N\bigl(v_p(\delta)+t\bigr).
\]
Their difference is uniformly bounded in \(t\): the two affine arguments,
after adding one, have a ratio bounded above and below, while
\(N(X)=\log_3(X+1)+O(1)\).  A zero of \(L\) is common to both polynomials,
and \(V\) has no integer zero.  Hence the extended difference is bounded and
the property fails.  The case in which only \(U\) is nonconstant is
symmetric.

In the remaining case \(C,U,V\) are all nonconstant.  The inequalities
\[
  \deg C+\deg U\le\deg F\le2,
  \qquad
  \deg C+\deg V\le\deg G\le2
\]
force all three pairwise coprime support factors to be linear.  Equality in
both inequalities also forces every factor multiplicity on the corresponding
side to be one, so this is the case
\(r_L=r_U=s_L=s_V=1\) of \cref{thm:linear-defects}.

These cases exhaust the possible supports in degree at most two.  The
criterion uses only \(C,U,V\), and intersectivity is unchanged by contents
and multiplicities by \cref{lem:radical-intersective}.  Applying the same
criterion to \(\radQ(F),\radQ(G)\) proves the final equivalence.
\end{proof}

\begin{proof}[Proof of \cref{cor:normalized-linear}]
If \(a=b\), the two polynomials are identical.  Otherwise apply
\cref{thm:linear-defects} with \(L=x\), \(U=ax+1\), \(V=bx+1\), and all
four exponents equal to one.
\end{proof}

When \(\gcd(|a|,|b|)=1\) and \(|a|,|b|>1\), the unique factors \(ax+1\)
and \(bx+1\) in \cref{cor:normalized-linear} are not intersective, whereas
their product is intersective: for each prime power, at least one of \(a,b\)
is invertible, and the Chinese remainder theorem combines the corresponding
linear roots.
Thus neither half of \cref{cor:sandwich} alone implies the conclusion.  The
choice \(a=2,b=3\) decides the basic model
\[
  F(x)=x(2x+1),\qquad G(x)=x(3x+1).
\]

\begin{question}\label[question]{q:exceptional-boundary}
Give a finite local--Galois criterion for \(\cP(F,G)\) when \(UV\) is
prime-covering but neither \(U\) nor \(V\) is intersective.  Beyond
\cref{cor:degree-two}, decide higher-degree split pairs, beginning with four
linear support factors or with nonlinear support factors.
\end{question}

Any complete answer must combine the permutation-group covering of the roots
of \(UV\) with the \(p\)-adic branches at the bad primes.  Prime density alone
cannot distinguish \cref{ex:236} from the intersective example following it;
multiplicities alone are also insufficient because prime test sets ignore
them.

\section{Acknowledgments and AI Disclosure}
  The author thanks Yuchen Ding for introducing this issue to him and for providing certain assistance in terms of writing and literature. The author thanks his graduate supervisor Huixi Li for providing him with many useful suggestions regarding the revision of the article.
  
  In a manuscript written around last November, the author developed the rudiments of \textit{intersective polynomial} and \textit{prime-covering}, and excluded some cases where \(\cP(F,G)\) did not hold. However, at that time, no AI tools were utilized. The author made multiple iterations on the original draft using ChatGPT 5.6Sol, completing the collection of literature, the writing of the article, including the establishment of the main lemma~\ref{lem:uniform-saturation}. Nevertheless, the final proofs in this paper have been reviewed by the human author, the author is responsible for this.

\end{document}